\renewcommand{\leq}{\ensuremath{\leqslant}}
\newcommand{\minimize}[2]{\ensuremath{\underset{\substack{{#1}}}%
{\text{minimize}}\;\;#2 }}
\newcommand{\scal}[2]{{\left\langle{{#1}\mid{#2}}\right\rangle}}
\newcommand{\menge}[2]{\big\{{#1}~\big |~{#2}\big\}}
\newcommand{\KKK}{\ensuremath{\boldsymbol{\mathcal K}}}
\newcommand{\HH}{\ensuremath{{\mathcal H}}}
\newcommand{\GG}{\ensuremath{{\mathcal G}}}
\newcommand{\Sum}{\ensuremath{\displaystyle\sum}}
\newcommand{\emp}{\ensuremath{{\varnothing}}}
\newcommand{\Id}{\ensuremath{\text{\rm Id}}\,}
\newcommand{\cart}{\ensuremath{\raisebox{-0.5mm}{\mbox{\LARGE{$\times$}}}}}
\newcommand{\BL}{\ensuremath{\EuScript B}\,}
\newcommand{\RPP}{\ensuremath{\left]0,+\infty\right[}}
\newcommand{\RX}{\ensuremath{\left]-\infty,+\infty\right]}}
\newcommand{\NN}{\ensuremath{\mathbb N}}
\newcommand{\weakly}{\ensuremath{\:\rightharpoonup\:}}
\newcommand{\ran}{\ensuremath{\text{\rm ran}\,}}
\newcommand{\zer}{\ensuremath{\text{\rm zer}\,}}
\newcommand{\pinf}{\ensuremath{{+\infty}}}
\newcommand{\dom}{\ensuremath{\text{\rm dom}\,}}
\newcommand{\prox}{\ensuremath{\text{\rm prox}}}
\newcommand{\gra}{\ensuremath{\text{\rm gra}\,}}
\newtheorem{theorem}{Theorem}[section]
\newtheorem{lemma}[theorem]{Lemma}
\newtheorem{corollary}[theorem]{Corollary}
\theoremstyle{plain}{\theorembodyfont{\rmfamily}%
}
\theoremstyle{plain}{\theorembodyfont{\rmfamily}%
}
\theoremstyle{plain}{\theorembodyfont{\rmfamily}%
\newtheorem{remark}[theorem]{Remark}}
\theoremstyle{plain}{\theorembodyfont{\rmfamily}%
}
\theoremstyle{plain}{\theorembodyfont{\rmfamily}%
}
\theoremstyle{plain}{\theorembodyfont{\rmfamily}%
}
\theoremstyle{plain}{\theorembodyfont{\rmfamily}%
\newtheorem{problem}[theorem]{Problem}}
\numberwithin{equation}{section}
\begin{document}

\title{\sffamily\huge A Primal-Dual Method of Partial Inverses\\
for Composite Inclusions\footnote{Contact author: 
P. L. Combettes, \ttfamily{plc@math.jussieu.fr},
phone: +33 1 4427 6319, fax: +33 1 4427 7200.}}

\author{
Maryam A. Alghamdi,$\!^{\sharp}$
~Abdullah Alotaibi,$\!^{\natural}$
~Patrick L. Combettes,$\!^{\flat,\natural}$~ and 
~Naseer Shahzad$\,^\natural$
\\[5mm]
\small $^\sharp$King Abdulaziz University\\
\small Department of Mathematics\\
\small Sciences Faculty for Girls, P. O. Box 4087\\
\small Jeddah 21491, Saudi Arabia\\[5mm]
\small $^\natural$King Abdulaziz University\\
\small Department of Mathematics, P. O. Box 80203\\
\small Jeddah 21859, Saudi Arabia\\[5mm]
\small $^\flat$UPMC Universit\'e Paris 06\\
\small Laboratoire Jacques-Louis Lions -- UMR CNRS 7598\\
\small 75005 Paris, France\\[4mm]
}
\date{~}

\maketitle
\thispagestyle{empty}

\vskip 8mm

\begin{abstract} 
\noindent
Spingarn's method of partial inverses has found many applications
in nonlinear analysis and in optimization. We show that it can be
employed to solve composite monotone inclusions in duality, 
thus opening a new range of applications for the partial 
inverse formalism. The versatility of the resulting 
primal-dual splitting algorithm is illustrated through applications 
to structured monotone inclusions and optimization.
\end{abstract} 

\vskip 10mm

{\bfseries Keywords} 
convex optimization,
duality, 
method of partial inverses,
monotone operator,
splitting algorithm

\vskip 6mm

{\bfseries Mathematics Subject Classifications (2010)} 
Primary 47H05; Secondary 65K05, 90C25.

\maketitle

\newpage

\section{Introduction}

Let $\HH$ be a real Hilbert space, let $A\colon\HH\to 2^{\HH}$ be a 
set-valued operator, let 
$\gra A=\menge{(x,u)\in\HH\times\HH}{u\in Ax}$ denote the graph of
$A$, let $V$ be a closed vector subspace of $\HH$, and let 
$P_V$ and $P_{V^\bot}$ denote respectively the projectors 
onto $V$ and onto its orthogonal complement. The partial 
inverse of $A$ with respect to $V$ is defined through 
\begin{equation}
\label{e:2013-07-25a}
\gra A_V=\menge{(P_Vx+P_{V^\bot}u,P_Vu+P_{V^\bot}x)}
{(x,u)\in\gra A}.
\end{equation}
This operator, which was introduced by Spingarn in \cite{Spin83}, 
can be regarded as an intermediate object between $A$ and $A^{-1}$.
A key result of \cite{Spin83} is that, if $A$ is maximally 
monotone, problems of the form
\begin{equation}
\label{e:999fgr623l17-27h}
\text{find}\;\;x\in V\;\;\text{and}\;\;
u\in V^\bot\;\;\text{such that}\;\;u\in Ax
\end{equation}
can be solved by applying the proximal point algorithm algorithm
\cite{Rock76} to the partial inverse $A_V$. The resulting
algorithm, known as the \emph{method of partial inverses}, has 
been applied to various problems in nonlinear analysis and 
optimization; see, e.g., \cite{Jat104,Bura06,Joca09,Ecks92,%
Idri89,Lema89,Mahe95,Penn02,Spin83,Spin85,Spin87-2}. 
The goal of the present paper is to propose a new range of 
applications of the method of partial inverses by showing that 
it can be applied to solving the following type of monotone 
inclusions in duality.

\begin{problem}
\label{prob:1}
Let $\HH$ and $\GG$ be real Hilbert spaces, let 
$A\colon\HH\to 2^{\HH}$ and $B\colon\GG\to 2^{\GG}$ be maximally 
monotone operators, and let $L\colon\HH\to\GG$ be a bounded linear 
operator. Solve the primal inclusion
\begin{equation}
\label{e:primal}
\text{find}\;\;x\in\HH\;\;\text{such that}\;\; 
0\in Ax+L^*BLx
\end{equation}
together with the dual inclusion
\begin{equation}
\label{e:dual}
\text{find}\;\;v\in\GG\;\;\text{such that}\;\; 
0\in -LA^{-1}(-L^*v)+B^{-1}v.
\end{equation}
\end{problem}

The operator duality described in \eqref{e:primal}--\eqref{e:dual}
is an extension of the classical Fenchel-Rockafellar duality setting
for functions \cite{Rock67} which has been studied in particular in 
\cite{Baus12,Siop11,Ecks99,Penn00,Robi99}.
Our main result shows that, through a suitable reformulation, 
Problem~\ref{prob:1} can be reduced to a problem of the form 
\eqref{e:999fgr623l17-27h} and that the method of partial 
inverses applied to the latter leads to a splitting algorithm in 
which the operators $A$, $B$, and $L$ are used separately. 

The remainder of the paper is organized as follows.
In Section~\ref{sec:2}, we revisit the method of partial inverses 
and propose new convergence results. Our method of partial inverses
for solving Problem~\ref{prob:1} is presented in 
Section~\ref{sec:3}, together with applications. An alternative 
implementation of this method is proposed in Section~\ref{sec:4},
where further applications are provided. Section~\ref{sec:5}
contains concluding remarks.

{\bfseries Notation.}  
The scalar product of a real Hilbert space $\HH$ is denoted by 
$\scal{\cdot}{\cdot}$ and the associated norm by $\|\cdot\|$;
$\weakly$ and $\to$ denote, respectively, weak and 
strong convergence, and $\Id$ is the identity operator. 
We denote by $\BL(\HH,\GG)$ the class of bounded linear operators
from $\HH$ to a real Hilbert space $\GG$.
Let $A\colon\HH\to 2^{\HH}$.
The inverse $A^{-1}$ of $A$ is defined 
via $\gra A^{-1}=\menge{(u,x)\in\HH\times\HH}{(x,u)\in\gra A}$,
$J_A=(\Id+A)^{-1}$ is the resolvent of $A$, 
$\ran A=\bigcup_{x\in\HH}Ax$ is the range of $A$, and
$\zer A=\menge{x\in\HH}{0\in Ax}$ is the set of zeros of $A$.
We denote by $\Gamma_0(\HH)$ the class of lower semicontinuous 
convex functions $f\colon\HH\to\RX$ such that
$\dom f=\menge{x\in\HH}{f(x)<\pinf}\neq\emp$. Let
$f\in\Gamma_0(\HH)$. The conjugate of $f$ is the function
$f^*\in\Gamma_0(\HH)$ defined by $f^*\colon u\mapsto
\sup_{x\in\HH}(\scal{x}{u}-f(x))$.
For every $x\in\HH$, $f+\|x-\cdot\|^2/2$ possesses a 
unique minimizer, which is denoted by $\prox_fx$. We have 
\begin{equation}
\label{e:prox2}
\prox_f=J_{\partial f},\quad\text{where}\quad
\partial f\colon\HH\to 2^{\HH}\colon x\mapsto
\menge{u\in\HH}{(\forall y\in\HH)\;\:\scal{y-x}{u}+f(x)\leq f(y)} 
\end{equation}
is the subdifferential of $f$. 
See \cite{Livre1} for background on monotone operators and 
convex analysis.

\section{A method of partial inverses}
\label{sec:2}

Throughout this section, $\KKK$ denotes a real Hilbert space.
We establish the convergence of a relaxed, 
error-tolerant method of partial inverses. 
First, we review some results about partial inverses and
the proximal point algorithm.

\begin{lemma}{\rm\cite[Section~2]{Spin83}}
\label{l:2013-07-26a}
Let $\boldsymbol{A}\colon\KKK\to 2^{\KKK}$ be maximally monotone,
let $\boldsymbol{V}$ be a closed vector subspace of 
$\KKK$, and let $\boldsymbol{z}\in\KKK$. Then 
$\boldsymbol{A}_{\boldsymbol{V}}$ is maximally monotone and 
$\boldsymbol{z}\in\zer\boldsymbol{A}_{\boldsymbol{V}}$ 
$\Leftrightarrow$ $(P_{\boldsymbol{V}}\boldsymbol{z},
P_{\boldsymbol{V}^\bot}\boldsymbol{z})\in\gra\boldsymbol{A}$.
\end{lemma}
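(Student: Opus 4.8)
The statement bundles three claims: that $A_V$ is monotone, that it is maximal, and the equivalence $z\in\zer A_V\Leftrightarrow(P_Vz,P_{V^\bot}z)\in\gra A$. My plan is to treat them in that order, getting monotonicity and maximality out of the way first (maximality via Minty's characterization, i.e. surjectivity of $\Id+A_V$), after which the zero characterization is a short unwinding of the definition \eqref{e:2013-07-25a}. A convenient bookkeeping device is the map $U\colon\KKK\oplus\KKK\to\KKK\oplus\KKK\colon(x,u)\mapsto(P_Vx+P_{V^\bot}u,\,P_Vu+P_{V^\bot}x)$: using $\|P_Vx+P_{V^\bot}u\|^2=\|P_Vx\|^2+\|P_{V^\bot}u\|^2$ and the analogous identity for the second component, one checks that $U$ is an isometry, and a direct substitution gives $U^2=\Id$, so $U$ is a self-inverse unitary with $\gra A_V=U(\gra A)$. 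Since a general isometry of $\KKK\oplus\KKK$ need not preserve monotonicity, I would not try to transport maximal monotonicity abstractly but instead argue by hand.

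\emph{Monotonicity.} Take $(z_i,w_i)=U(x_i,u_i)$ with $(x_i,u_i)\in\gra A$, $i\in\{1,2\}$, and set $p=x_1-x_2$, $q=u_1-u_2$, so that $z_1-z_2=P_Vp+P_{V^\bot}q$ and $w_1-w_2=P_Vq+P_{V^\bot}p$. Expanding $\scal{z_1-z_2}{w_1-w_2}$, the two cross terms vanish because $V\perp V^\bot$, and the identity $\scal{P_Va}{P_Vb}+\scal{P_{V^\bot}a}{P_{V^\bot}b}=\scal{a}{b}$ (valid for complementary orthogonal projectors) reduces the remaining two terms to $\scal{p}{q}=\scal{x_1-x_2}{u_1-u_2}\geq0$ by monotonicity of $A$. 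Hence $A_V$ is monotone.

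\emph{Maximality.} Fix $y\in\KKK$; I seek $(z,w)\in\gra A_V$ with $z+w=y$. For any $(x,u)$, the pair $(z,w)=U(x,u)$ satisfies $z+w=(P_V+P_{V^\bot})(x+u)=x+u$, so the task reduces to finding $(x,u)\in\gra A$ with $x+u=y$, i.e. $x=J_Ay$ and $u=y-x$; such a pair exists (and is unique) because $A$ is maximally monotone, so $J_A$ is single-valued with full domain. Reading off $z=P_Vx+P_{V^\bot}u$ and $w=P_Vu+P_{V^\bot}x$ gives $y\in\ran(\Id+A_V)$, whence $\ran(\Id+A_V)=\KKK$ and, being monotone, $A_V$ is maximally monotone.

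\emph{Zero characterization.} We have $z\in\zer A_V\Leftrightarrow(z,0)\in\gra A_V\Leftrightarrow$ there exists $(x,u)\in\gra A$ with $P_Vx+P_{V^\bot}u=z$ and $P_Vu+P_{V^\bot}x=0$. In the second identity the summands lie in $V$ and $V^\bot$ respectively and $V\cap V^\bot=\{0\}$, so $P_Vu=0$ and $P_{V^\bot}x=0$, i.e. $x\in V$ and $u\in V^\bot$; the first identity then reads $z=x+u$, forcing $x=P_Vz$ and $u=P_{V^\bot}z$, so $(P_Vz,P_{V^\bot}z)\in\gra A$. Conversely, if $(P_Vz,P_{V^\bot}z)\in\gra A$, taking $x=P_Vz\in V$ and $u=P_{V^\bot}z\in V^\bot$ gives $P_Vx+P_{V^\bot}u=z$ and $P_Vu+P_{V^\bot}x=0$, i.e. $(z,0)\in\gra A_V$. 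The only step requiring more than orthogonal-decomposition bookkeeping is maximality, where the single substantive input is Minty's surjectivity characterization applied to $A$; I would flag that step (together with the complementary-projector identity underlying the monotonicity computation) as the crux of the argument.
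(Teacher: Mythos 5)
The paper states this lemma without proof, citing \cite[Section~2]{Spin83}, and your argument is correct as written: the monotonicity computation, the Minty surjectivity step, and the unwinding of \eqref{e:2013-07-25a} for the zero characterization are all sound. It is essentially the original argument; note only that your key identity $\scal{z_1-z_2}{w_1-w_2}=\scal{x_1-x_2}{u_1-u_2}$ already shows that the graph transformation is a bijection of $\KKK\times\KKK$ preserving the monotonicity form, hence carrying maximal monotone sets to maximal monotone sets, which would let you dispense with the separate Minty/resolvent step for maximality if you preferred a shorter route.
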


\begin{lemma}
\label{l:999fgr623l17-29}
Let $\boldsymbol{A}\colon\KKK\to 2^{\KKK}$ be maximally monotone,
let $\boldsymbol{V}$ be a closed vector subspace of 
$\KKK$, let $\boldsymbol{z}\in\KKK$, and let 
$\boldsymbol{p}\in\KKK$. Then 
$\boldsymbol{p}=J_{\boldsymbol{A}_{\boldsymbol{V}}}\boldsymbol{z}$ 
$\Leftrightarrow$ $P_{\boldsymbol{V}}\boldsymbol{p}+
P_{\boldsymbol{V}^\bot}(\boldsymbol{z}-\boldsymbol{p})
=J_{\boldsymbol{A}}\boldsymbol{z}$.
\end{lemma}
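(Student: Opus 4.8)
The plan is simply to unwind the definitions of the resolvent and of the partial inverse and then read off the equivalence by decomposing everything along $\KKK=\boldsymbol{V}\oplus\boldsymbol{V}^\bot$. By definition, $\boldsymbol{p}=J_{\boldsymbol{A}_{\boldsymbol{V}}}\boldsymbol{z}$ is equivalent to $(\boldsymbol{p},\boldsymbol{z}-\boldsymbol{p})\in\gra\boldsymbol{A}_{\boldsymbol{V}}$, and by \eqref{e:2013-07-25a} this holds if and only if there exists $(x,u)\in\gra\boldsymbol{A}$ with $\boldsymbol{p}=P_{\boldsymbol{V}}x+P_{\boldsymbol{V}^\bot}u$ and $\boldsymbol{z}-\boldsymbol{p}=P_{\boldsymbol{V}}u+P_{\boldsymbol{V}^\bot}x$. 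Applying $P_{\boldsymbol{V}}$ and $P_{\boldsymbol{V}^\bot}$ to these two identities and recombining yields $x=P_{\boldsymbol{V}}\boldsymbol{p}+P_{\boldsymbol{V}^\bot}(\boldsymbol{z}-\boldsymbol{p})$ and $u=P_{\boldsymbol{V}}(\boldsymbol{z}-\boldsymbol{p})+P_{\boldsymbol{V}^\bot}\boldsymbol{p}$; in particular $x+u=\boldsymbol{z}$. This is the one observation that makes the whole thing go: since $u=\boldsymbol{z}-x$, the condition $(x,u)\in\gra\boldsymbol{A}$ becomes $\boldsymbol{z}-x\in\boldsymbol{A}x$, i.e.\ $x=J_{\boldsymbol{A}}\boldsymbol{z}$, and substituting the expression for $x$ gives precisely $P_{\boldsymbol{V}}\boldsymbol{p}+P_{\boldsymbol{V}^\bot}(\boldsymbol{z}-\boldsymbol{p})=J_{\boldsymbol{A}}\boldsymbol{z}$.

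Conversely, I would set $x:=J_{\boldsymbol{A}}\boldsymbol{z}$ and $u:=\boldsymbol{z}-x$, so that $(x,u)\in\gra\boldsymbol{A}$, and assume $P_{\boldsymbol{V}}\boldsymbol{p}+P_{\boldsymbol{V}^\bot}(\boldsymbol{z}-\boldsymbol{p})=x$. Decomposing this relation along $\boldsymbol{V}$ and $\boldsymbol{V}^\bot$ gives $P_{\boldsymbol{V}}x=P_{\boldsymbol{V}}\boldsymbol{p}$ and $P_{\boldsymbol{V}^\bot}x=P_{\boldsymbol{V}^\bot}(\boldsymbol{z}-\boldsymbol{p})$, whence also $P_{\boldsymbol{V}}u=P_{\boldsymbol{V}}(\boldsymbol{z}-\boldsymbol{p})$ and $P_{\boldsymbol{V}^\bot}u=P_{\boldsymbol{V}^\bot}\boldsymbol{p}$; a two-line computation then gives $P_{\boldsymbol{V}}x+P_{\boldsymbol{V}^\bot}u=\boldsymbol{p}$ and $P_{\boldsymbol{V}}u+P_{\boldsymbol{V}^\bot}x=\boldsymbol{z}-\boldsymbol{p}$, so that $(\boldsymbol{p},\boldsymbol{z}-\boldsymbol{p})\in\gra\boldsymbol{A}_{\boldsymbol{V}}$ by \eqref{e:2013-07-25a}, i.e.\ $\boldsymbol{p}=J_{\boldsymbol{A}_{\boldsymbol{V}}}\boldsymbol{z}$. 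Equivalently, and perhaps more cleanly, one may note that the map $T\colon(x,u)\mapsto(P_{\boldsymbol{V}}x+P_{\boldsymbol{V}^\bot}u,\,P_{\boldsymbol{V}}u+P_{\boldsymbol{V}^\bot}x)$ is an involution of $\KKK\times\KKK$ with $\gra\boldsymbol{A}_{\boldsymbol{V}}=T(\gra\boldsymbol{A})$, and that $T(\boldsymbol{p},\boldsymbol{z}-\boldsymbol{p})=(x,\boldsymbol{z}-x)$ with $x=P_{\boldsymbol{V}}\boldsymbol{p}+P_{\boldsymbol{V}^\bot}(\boldsymbol{z}-\boldsymbol{p})$; hence $(\boldsymbol{p},\boldsymbol{z}-\boldsymbol{p})\in\gra\boldsymbol{A}_{\boldsymbol{V}}\Leftrightarrow(x,\boldsymbol{z}-x)\in\gra\boldsymbol{A}\Leftrightarrow x=J_{\boldsymbol{A}}\boldsymbol{z}$, which is the assertion, and this dispenses with doing the two directions separately.

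I do not anticipate a genuine obstacle here: the argument is pure bookkeeping with the orthogonal decomposition $\KKK=\boldsymbol{V}\oplus\boldsymbol{V}^\bot$. The only points requiring a word of care are, first, that one should invoke Lemma~\ref{l:2013-07-26a} to know that $\boldsymbol{A}_{\boldsymbol{V}}$ is maximally monotone, so that $J_{\boldsymbol{A}_{\boldsymbol{V}}}$ is single-valued and everywhere defined and the passage between ``$\boldsymbol{p}=J_{\boldsymbol{A}_{\boldsymbol{V}}}\boldsymbol{z}$'' and ``$(\boldsymbol{p},\boldsymbol{z}-\boldsymbol{p})\in\gra\boldsymbol{A}_{\boldsymbol{V}}$'' is legitimate; and second, that when extracting $(x,u)$ from the defining relation of $\gra\boldsymbol{A}_{\boldsymbol{V}}$ one must verify that $x$ and $u$ are \emph{uniquely} determined by $\boldsymbol{p}$ and $\boldsymbol{z}$ — which is exactly what the project-and-recombine step accomplishes — so that no spurious degree of freedom is left in the equivalence.
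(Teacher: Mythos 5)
Your proposal is correct and follows essentially the same route as the paper: the paper's proof is precisely the chain of equivalences obtained by unwinding $J_{\boldsymbol{A}}$, $J_{\boldsymbol{A}_{\boldsymbol{V}}}$, and \eqref{e:2013-07-25a} along the decomposition $\KKK=\boldsymbol{V}\oplus\boldsymbol{V}^\bot$, which is exactly your involution observation written out step by step. Nothing further is needed.
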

\begin{proof}
This result, which appears implicitly in \cite[Section~4]{Spin83}, 
follows from the equivalences
\begin{eqnarray}
\label{e:h5-hGEWD12a}
P_{\boldsymbol{V}}\boldsymbol{p}+P_{\boldsymbol{V}^\bot}
(\boldsymbol{z}-\boldsymbol{p})=J_{\boldsymbol{A}}\boldsymbol{z}
&\Leftrightarrow&
\big(P_{\boldsymbol{V}}\boldsymbol{p}+P_{\boldsymbol{V}^\bot}
(\boldsymbol{z}-\boldsymbol{p}),
\boldsymbol{z}-P_{\boldsymbol{V}}\boldsymbol{p}-
P_{\boldsymbol{V}^\bot}(\boldsymbol{z}-\boldsymbol{p})\big)\in\gra
\boldsymbol{A}\nonumber\\
&\Leftrightarrow&
\big(P_{\boldsymbol{V}}\boldsymbol{p}+P_{\boldsymbol{V}^\bot}
(\boldsymbol{z}-\boldsymbol{p}),
P_{\boldsymbol{V}}(\boldsymbol{z}-\boldsymbol{p})+
P_{\boldsymbol{V}^\bot}\boldsymbol{p}\big)\in\gra\boldsymbol{A}
\nonumber\\
&\Leftrightarrow&
(\boldsymbol{p},\boldsymbol{z}-\boldsymbol{p})\in\gra
\boldsymbol{A}_{\boldsymbol{V}}
\nonumber\\
&\Leftrightarrow&
\boldsymbol{p}=J_{\boldsymbol{A}_{\boldsymbol{V}}}\boldsymbol{z}, 
\end{eqnarray}
where we have used \eqref{e:2013-07-25a}.
\end{proof}

\begin{lemma}{\rm\cite[Remark~2.2(vi)]{Joca09}}
\label{l:2009}
Let $\boldsymbol{B}\colon\KKK\to 2^{\KKK}$ be a maximally monotone
operator such that $\zer\boldsymbol{B}\neq\emp$, let 
$\boldsymbol{z}_0\in\KKK$, let $(\boldsymbol{c}_n)_{n\in\NN}$ be a 
sequence in $\KKK$, and let $(\lambda_n)_{n\in\NN}$ be a sequence 
in $\left]0,2\right[$. Suppose that 
$\sum_{n\in\NN}\lambda_n(2-\lambda_n)=\pinf$ and 
$\sum_{n\in\NN}\lambda_n\|\boldsymbol{c}_n\|<\pinf$, and set 
\begin{equation}
\label{e:alg04}
(\forall n\in\NN)\quad
\boldsymbol{z}_{n+1}=\boldsymbol{z}_n+\lambda_n
\big(J_{\boldsymbol{B}}\boldsymbol{z}_n+
\boldsymbol{c}_n-\boldsymbol{z}_n\big).
\end{equation}
Then $J_{\boldsymbol{B}}\boldsymbol{z}_n-\boldsymbol{z}_n\to
\boldsymbol{0}$ and $(\boldsymbol{z}_n)_{n\in\NN}$ converges 
weakly to a point in $\zer\boldsymbol{B}$.
\end{lemma}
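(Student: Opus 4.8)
The plan is to read \eqref{e:alg04} as an inexact, relaxed Krasnosel'ski\u\i--Mann iteration governed by the resolvent $\boldsymbol{T}=J_{\boldsymbol{B}}$, and to exploit the two standard properties of that operator: $\boldsymbol{T}$ is firmly nonexpansive and $\Fix\boldsymbol{T}=\zer\boldsymbol{B}\neq\emp$. Granting these, the statement is essentially an instance of the convergence theory of such iterations (see \cite{Livre1}); below I outline a self-contained route, writing $\boldsymbol{T}=J_{\boldsymbol{B}}$ throughout.

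First I would derive a quasi-Fej\'er estimate. Fix $\boldsymbol{z}\in\zer\boldsymbol{B}=\Fix\boldsymbol{T}$, rewrite \eqref{e:alg04} as $\boldsymbol{z}_{n+1}=(1-\lambda_n)\boldsymbol{z}_n+\lambda_n(\boldsymbol{T}\boldsymbol{z}_n+\boldsymbol{c}_n)$, and expand $\|\boldsymbol{z}_{n+1}-\boldsymbol{z}\|^2$ by means of the identity $\|(1-\lambda)a+\lambda b\|^2=(1-\lambda)\|a\|^2+\lambda\|b\|^2-\lambda(1-\lambda)\|a-b\|^2$, which holds for every $\lambda\in\RR$. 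Using firm nonexpansiveness of $\boldsymbol{T}$ in the form $\|\boldsymbol{T}\boldsymbol{z}_n-\boldsymbol{z}\|^2\le\|\boldsymbol{z}_n-\boldsymbol{z}\|^2-\|\boldsymbol{T}\boldsymbol{z}_n-\boldsymbol{z}_n\|^2$, and bounding the terms carrying $\boldsymbol{c}_n$ crudely via $\|\boldsymbol{T}\boldsymbol{z}_n-\boldsymbol{z}\|\le\|\boldsymbol{z}_n-\boldsymbol{z}\|$, $\|\boldsymbol{z}_n-\boldsymbol{T}\boldsymbol{z}_n\|\le 2\|\boldsymbol{z}_n-\boldsymbol{z}\|$, and $\lambda_n\in\left]0,2\right[$, one is led to
\[
\|\boldsymbol{z}_{n+1}-\boldsymbol{z}\|^2\le\|\boldsymbol{z}_n-\boldsymbol{z}\|^2-\lambda_n(2-\lambda_n)\|\boldsymbol{T}\boldsymbol{z}_n-\boldsymbol{z}_n\|^2+6\lambda_n\|\boldsymbol{c}_n\|\,\|\boldsymbol{z}_n-\boldsymbol{z}\|+\lambda_n^2\|\boldsymbol{c}_n\|^2 .
\]
Since $\sum_n\lambda_n\|\boldsymbol{c}_n\|<\pinf$ (hence $\lambda_n\|\boldsymbol{c}_n\|\to 0$ and $\sum_n\lambda_n^2\|\boldsymbol{c}_n\|^2<\pinf$), a discrete Gronwall step first yields boundedness of $(\boldsymbol{z}_n)_{n\in\NN}$; feeding that bound back into the displayed inequality converts it into a genuine quasi-Fej\'er inequality with summable perturbations, from which $(\|\boldsymbol{z}_n-\boldsymbol{z}\|)_{n\in\NN}$ converges and $\sum_n\lambda_n(2-\lambda_n)\|\boldsymbol{T}\boldsymbol{z}_n-\boldsymbol{z}_n\|^2<\pinf$.

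Next I would promote the last summability to $\boldsymbol{T}\boldsymbol{z}_n-\boldsymbol{z}_n\to\boldsymbol{0}$. Together with the hypothesis $\sum_n\lambda_n(2-\lambda_n)=\pinf$, summability forces $\liminf_n\|\boldsymbol{T}\boldsymbol{z}_n-\boldsymbol{z}_n\|=0$; to turn this $\liminf$ into a limit, set $\boldsymbol{R}=2\boldsymbol{T}-\Id$, which is nonexpansive with $\Fix\boldsymbol{R}=\Fix\boldsymbol{T}$, recast \eqref{e:alg04} as $\boldsymbol{z}_{n+1}=\boldsymbol{z}_n+(\lambda_n/2)(\boldsymbol{R}\boldsymbol{z}_n-\boldsymbol{z}_n)+\lambda_n\boldsymbol{c}_n$ with $\lambda_n/2\in\zeroun$, and use $\|\boldsymbol{R}\boldsymbol{z}_{n+1}-\boldsymbol{R}\boldsymbol{z}_n\|\le\|\boldsymbol{z}_{n+1}-\boldsymbol{z}_n\|$ to get $\|\boldsymbol{R}\boldsymbol{z}_{n+1}-\boldsymbol{z}_{n+1}\|\le\|\boldsymbol{R}\boldsymbol{z}_n-\boldsymbol{z}_n\|+2\lambda_n\|\boldsymbol{c}_n\|$. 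As $\sum_n\lambda_n\|\boldsymbol{c}_n\|<\pinf$, the sequence $(\|\boldsymbol{R}\boldsymbol{z}_n-\boldsymbol{z}_n\|)_{n\in\NN}=(2\|\boldsymbol{T}\boldsymbol{z}_n-\boldsymbol{z}_n\|)_{n\in\NN}$ converges, and its limit, having zero $\liminf$, equals $0$. Hence $J_{\boldsymbol{B}}\boldsymbol{z}_n-\boldsymbol{z}_n\to\boldsymbol{0}$, which is the first assertion.

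It remains to prove the weak convergence. Let $\boldsymbol{z}$ be a weak sequential cluster point of the bounded sequence $(\boldsymbol{z}_n)_{n\in\NN}$, say $\boldsymbol{z}_{k_n}\weakly\boldsymbol{z}$ for some strictly increasing $(k_n)_{n\in\NN}$. Then $J_{\boldsymbol{B}}\boldsymbol{z}_{k_n}\weakly\boldsymbol{z}$ and $\boldsymbol{z}_{k_n}-J_{\boldsymbol{B}}\boldsymbol{z}_{k_n}\to\boldsymbol{0}$; since $\boldsymbol{z}_{k_n}-J_{\boldsymbol{B}}\boldsymbol{z}_{k_n}\in\boldsymbol{B}(J_{\boldsymbol{B}}\boldsymbol{z}_{k_n})$ and $\gra\boldsymbol{B}$ is sequentially closed for the weak--strong topology (maximal monotonicity of $\boldsymbol{B}$), passing to the limit gives $\boldsymbol{0}\in\boldsymbol{B}\boldsymbol{z}$, i.e.\ $\boldsymbol{z}\in\zer\boldsymbol{B}$; equivalently, $\Id-J_{\boldsymbol{B}}$ is demiclosed at $\boldsymbol{0}$. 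Thus every weak sequential cluster point of $(\boldsymbol{z}_n)_{n\in\NN}$ lies in $\zer\boldsymbol{B}$, and combining this with the quasi-Fej\'er property established above, the standard Opial-type argument shows that $(\boldsymbol{z}_n)_{n\in\NN}$ converges weakly to a single point of $\zer\boldsymbol{B}$. The only genuinely delicate points are the bookkeeping of the $\boldsymbol{c}_n$-terms in the quasi-Fej\'er inequality and the $\liminf$-to-$\lim$ promotion via $\boldsymbol{R}$; everything else is routine.
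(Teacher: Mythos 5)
Your argument is correct. Note, however, that the paper does not prove this lemma at all: it is quoted verbatim from \cite[Remark~2.2(vi)]{Joca09}, so there is no in-paper proof to compare against. What you have written is a correct, self-contained derivation along the standard lines that underlie that citation: pass from the firmly nonexpansive resolvent $J_{\boldsymbol{B}}$ to the nonexpansive reflector $\boldsymbol{R}=2J_{\boldsymbol{B}}-\Id$ so that \eqref{e:alg04} becomes an inexact Krasnosel'ski\u\i--Mann iteration with relaxations $\lambda_n/2\in\zeroun$, establish quasi-Fej\'er monotonicity with summable perturbations, deduce $\sum_{n}\lambda_n(2-\lambda_n)\|J_{\boldsymbol{B}}\boldsymbol{z}_n-\boldsymbol{z}_n\|^2<\pinf$, promote the resulting $\liminf$ to a limit via the near-monotonicity of $n\mapsto\|\boldsymbol{R}\boldsymbol{z}_n-\boldsymbol{z}_n\|$, and conclude by demiclosedness of $\Id-J_{\boldsymbol{B}}$ (weak--strong closedness of $\gra\boldsymbol{B}$) together with the Opial-type uniqueness argument. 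The bookkeeping you flag as delicate checks out: the constant $6$ in the perturbation term and the bound $\|\boldsymbol{R}\boldsymbol{z}_{n+1}-\boldsymbol{z}_{n+1}\|\le\|\boldsymbol{R}\boldsymbol{z}_n-\boldsymbol{z}_n\|+2\lambda_n\|\boldsymbol{c}_n\|$ are both correct (the latter uses $1-\lambda_n/2\ge 0$, which holds since $\lambda_n<2$).
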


The next theorem analyzes a method of partial inverses and 
extends the results of \cite{Ecks92} and \cite{Spin83}.

\begin{theorem}
\label{t:2013-08-19}
Let $\boldsymbol{A}\colon\KKK\to 2^{\KKK}$ be a maximally monotone
operator, let $\boldsymbol{V}$ be a closed vector subspace of 
$\KKK$, let $(\lambda_n)_{n\in\NN}$ be a sequence in 
$\left]0,2\right[$, and let $(\boldsymbol{e}_n)_{n\in\NN}$ be a 
sequence in $\KKK$ such that 
$\sum_{n\in\NN}\lambda_n(2-\lambda_n)=\pinf$ and 
$\sum_{n\in\NN}\lambda_n\|\boldsymbol{e}_n\|<\pinf$.
Suppose that the problem 
\begin{equation}
\label{e:999fgr623l17-27a}
\text{find}\;\;\boldsymbol{x}\in
\boldsymbol{V}\;\;\text{and}\;\;\boldsymbol{u}
\in\boldsymbol{V}^\bot\;\;\text{such that}\;\;\boldsymbol{u}\in
\boldsymbol{A}\boldsymbol{x}
\end{equation}
has at least one solution, let $\boldsymbol{x}_0\in\boldsymbol{V}$, 
let $\boldsymbol{u}_0\in\boldsymbol{V}^\bot$, and set
\begin{equation}
\label{e:999fgr623l18-20b}
(\forall n\in\NN)\quad 
\begin{array}{l}
\left\lfloor
\begin{array}{l}
\boldsymbol{p}_n=J_{\boldsymbol{A}}(\boldsymbol{x}_n+
\boldsymbol{u}_n)+\boldsymbol{e}_n\\
\boldsymbol{r}_n=\boldsymbol{x}_n+\boldsymbol{u}_n-
\boldsymbol{p}_n\\
\boldsymbol{x}_{n+1}=
\boldsymbol{x}_n-\lambda_n P_{\boldsymbol{V}}\boldsymbol{r}_n\\
\boldsymbol{u}_{n+1}=
\boldsymbol{u}_n-\lambda_n P_{\boldsymbol{V}^\bot}
\boldsymbol{p}_n.
\end{array}
\right.\\[2mm]
\end{array}
\end{equation}
Then the following hold:
\begin{enumerate}
\item
\label{t:2013-08-19i}
$P_{\boldsymbol{V}}(\boldsymbol{p}_n-\boldsymbol{e}_n)-
\boldsymbol{x}_n\to\boldsymbol{0}$ ~and~ 
$P_{\boldsymbol{V}^\bot}(\boldsymbol{r}_n+\boldsymbol{e}_n)-
\boldsymbol{u}_n\to\boldsymbol{0}$.
\item
\label{t:2013-08-19ii}
There exists a solution
$(\overline{\boldsymbol{x}},\overline{\boldsymbol{u}})$ to 
\eqref{e:999fgr623l17-27a} such that 
$\boldsymbol{x}_n\weakly\overline{\boldsymbol{x}}$ ~and~
$\boldsymbol{u}_n\weakly\overline{\boldsymbol{u}}$.
\end{enumerate}
\end{theorem}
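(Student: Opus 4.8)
The plan is to recognize the iteration \eqref{e:999fgr623l18-20b} as a relaxed, error-tolerant proximal point algorithm applied to the partial inverse $\boldsymbol{A}_{\boldsymbol{V}}$, and then invoke Lemma~\ref{l:2009}. To this end, set $\boldsymbol{z}_n=\boldsymbol{x}_n+\boldsymbol{u}_n$; since $\boldsymbol{x}_n\in\boldsymbol{V}$ and $\boldsymbol{u}_n\in\boldsymbol{V}^\bot$ (which is preserved by the updates, as $P_{\boldsymbol{V}}\boldsymbol{r}_n\in\boldsymbol{V}$ and $P_{\boldsymbol{V}^\bot}\boldsymbol{p}_n\in\boldsymbol{V}^\bot$), we have $\boldsymbol{x}_n=P_{\boldsymbol{V}}\boldsymbol{z}_n$ and $\boldsymbol{u}_n=P_{\boldsymbol{V}^\bot}\boldsymbol{z}_n$. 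Using Lemma~\ref{l:999fgr623l17-29}, the exact partial-inverse resolvent $\boldsymbol{q}_n:=J_{\boldsymbol{A}_{\boldsymbol{V}}}\boldsymbol{z}_n$ is characterized by $P_{\boldsymbol{V}}\boldsymbol{q}_n+P_{\boldsymbol{V}^\bot}(\boldsymbol{z}_n-\boldsymbol{q}_n)=J_{\boldsymbol{A}}\boldsymbol{z}_n$; writing $\widehat{\boldsymbol{p}}_n=J_{\boldsymbol{A}}\boldsymbol{z}_n$ (so $\boldsymbol{p}_n=\widehat{\boldsymbol p}_n+\boldsymbol e_n$) and $\widehat{\boldsymbol{r}}_n=\boldsymbol{z}_n-\widehat{\boldsymbol p}_n$, one gets $P_{\boldsymbol{V}}\boldsymbol{q}_n=P_{\boldsymbol{V}}\widehat{\boldsymbol p}_n$ and $P_{\boldsymbol{V}^\bot}\boldsymbol{q}_n=P_{\boldsymbol{V}^\bot}\widehat{\boldsymbol r}_n$, hence $\boldsymbol{q}_n=P_{\boldsymbol{V}}\widehat{\boldsymbol p}_n+P_{\boldsymbol{V}^\bot}\widehat{\boldsymbol r}_n$. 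A direct computation then shows that $\boldsymbol{z}_{n+1}=\boldsymbol{x}_{n+1}+\boldsymbol{u}_{n+1}=\boldsymbol{z}_n+\lambda_n(\boldsymbol{q}_n+\boldsymbol{c}_n-\boldsymbol{z}_n)$ for the error term $\boldsymbol{c}_n=P_{\boldsymbol{V}^\bot}\boldsymbol e_n-P_{\boldsymbol{V}}\boldsymbol e_n$, because $\boldsymbol{z}_n-\boldsymbol{q}_n=P_{\boldsymbol{V}}\widehat{\boldsymbol r}_n+P_{\boldsymbol{V}^\bot}\widehat{\boldsymbol p}_n=P_{\boldsymbol{V}}\boldsymbol{r}_n+P_{\boldsymbol{V}^\bot}\boldsymbol{p}_n$ up to the error, and matching the $\boldsymbol{V}$- and $\boldsymbol{V}^\bot$-components recovers exactly the $\boldsymbol{x}$- and $\boldsymbol{u}$-updates.

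Once this identification is made, I would verify the hypotheses of Lemma~\ref{l:2009} with $\boldsymbol{B}=\boldsymbol{A}_{\boldsymbol{V}}$. By Lemma~\ref{l:2013-07-26a}, $\boldsymbol{A}_{\boldsymbol{V}}$ is maximally monotone, and the solvability of \eqref{e:999fgr623l17-27a} together with that lemma gives $\zer\boldsymbol{A}_{\boldsymbol{V}}\neq\emp$. Since $\|\boldsymbol{c}_n\|\le\|\boldsymbol e_n\|$ (the two projections are orthogonal), $\sum_{n\in\NN}\lambda_n\|\boldsymbol{c}_n\|\le\sum_{n\in\NN}\lambda_n\|\boldsymbol e_n\|<\pinf$, and the step-size condition $\sum_{n\in\NN}\lambda_n(2-\lambda_n)=\pinf$ is assumed. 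Lemma~\ref{l:2009} then yields $\boldsymbol{q}_n-\boldsymbol{z}_n\to\boldsymbol{0}$ and $\boldsymbol{z}_n\weakly\overline{\boldsymbol{z}}$ for some $\overline{\boldsymbol{z}}\in\zer\boldsymbol{A}_{\boldsymbol{V}}$.

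For \ref{t:2013-08-19i}, note $\boldsymbol{q}_n-\boldsymbol{z}_n=(P_{\boldsymbol{V}}\widehat{\boldsymbol p}_n-\boldsymbol{x}_n)+(P_{\boldsymbol{V}^\bot}\widehat{\boldsymbol r}_n-\boldsymbol{u}_n)$ with the two summands in $\boldsymbol{V}$ and $\boldsymbol{V}^\bot$ respectively, so each tends to $\boldsymbol{0}$; since $\widehat{\boldsymbol p}_n=\boldsymbol{p}_n-\boldsymbol e_n$ and $\widehat{\boldsymbol r}_n=\boldsymbol{r}_n+\boldsymbol e_n$, this is exactly the claimed convergence. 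For \ref{t:2013-08-19ii}, apply $P_{\boldsymbol{V}}$ and $P_{\boldsymbol{V}^\bot}$ (both weakly continuous) to $\boldsymbol{z}_n\weakly\overline{\boldsymbol{z}}$ to get $\boldsymbol{x}_n\weakly P_{\boldsymbol{V}}\overline{\boldsymbol{z}}=:\overline{\boldsymbol{x}}\in\boldsymbol{V}$ and $\boldsymbol{u}_n\weakly P_{\boldsymbol{V}^\bot}\overline{\boldsymbol{z}}=:\overline{\boldsymbol{u}}\in\boldsymbol{V}^\bot$; by Lemma~\ref{l:2013-07-26a}, $\overline{\boldsymbol{z}}\in\zer\boldsymbol{A}_{\boldsymbol{V}}$ means $(\overline{\boldsymbol{x}},\overline{\boldsymbol{u}})\in\gra\boldsymbol{A}$, i.e.\ $(\overline{\boldsymbol{x}},\overline{\boldsymbol{u}})$ solves \eqref{e:999fgr623l17-27a}.

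The main obstacle is the bookkeeping in the first paragraph: one must carefully track how the single error $\boldsymbol e_n$ entering $\boldsymbol{p}_n$ (and hence $\boldsymbol{r}_n$) splits across $\boldsymbol{V}$ and $\boldsymbol{V}^\bot$, confirm that $\boldsymbol{z}_n-\boldsymbol{q}_n$ equals $P_{\boldsymbol{V}}\boldsymbol{r}_n+P_{\boldsymbol{V}^\bot}\boldsymbol{p}_n$ modulo that error, and then check that the resulting relaxed update of $\boldsymbol{z}_n$ decomposes precisely into the prescribed updates of $\boldsymbol{x}_n$ and $\boldsymbol{u}_n$ — everything else is a direct appeal to the three preceding lemmas.
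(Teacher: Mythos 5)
Your proposal is correct and follows essentially the same route as the paper: rewrite the iteration in terms of $\boldsymbol{z}_n=\boldsymbol{x}_n+\boldsymbol{u}_n$, identify it via Lemma~\ref{l:999fgr623l17-29} as the inexact relaxed proximal point iteration $\boldsymbol{z}_{n+1}=\boldsymbol{z}_n+\lambda_n\big(J_{\boldsymbol{A}_{\boldsymbol{V}}}\boldsymbol{z}_n+\boldsymbol{c}_n-\boldsymbol{z}_n\big)$, and conclude with Lemmas~\ref{l:2013-07-26a} and~\ref{l:2009}. The only blemish is a sign slip in your error term (the identity holds with $\boldsymbol{c}_n=P_{\boldsymbol{V}}\boldsymbol{e}_n-P_{\boldsymbol{V}^\bot}\boldsymbol{e}_n$, not its negative), which is immaterial since only $\|\boldsymbol{c}_n\|$ enters the hypotheses of Lemma~\ref{l:2009}.
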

\begin{proof}
Set 
\begin{equation}
\label{e:999fgr623l18-19k}
(\forall n\in\NN)\quad
\boldsymbol{z}_n=\boldsymbol{x}_n+\boldsymbol{u}_n
\quad\text{and}\quad
\boldsymbol{c}_n=P_{\boldsymbol{V}}\boldsymbol{e}_n-
P_{\boldsymbol{V}^\bot}\boldsymbol{e}_n. 
\end{equation}
Then $\sum_{n\in\NN}\lambda_n\|\boldsymbol{c}_n\|\leq
\sum_{n\in\NN}\lambda_n(\|P_{\boldsymbol{V}}\boldsymbol{e}_n\|+
\|P_{\boldsymbol{V}^\bot}\boldsymbol{e}_n\|)
\leq 2\sum_{n\in\NN}\lambda_n\|\boldsymbol{e}_n\|<\pinf$. 
Furthermore, since $(\boldsymbol{x}_n)_{n\in\NN}$ lies in 
$\boldsymbol{V}$ and $(\boldsymbol{u}_n)_{n\in\NN}$ lies in 
$\boldsymbol{V}^\bot$, \eqref{e:999fgr623l18-20b} can be 
rewritten as
\begin{equation}
\label{e:999fgr623l17-27b}
(\forall n\in\NN)\quad 
\begin{array}{l}
\left\lfloor
\begin{array}{l}
\boldsymbol{p}_n=J_{\boldsymbol{A}}(\boldsymbol{x}_n+
\boldsymbol{u}_n)+\boldsymbol{e}_n\\
\boldsymbol{r}_n=\boldsymbol{x}_n+\boldsymbol{u}_n-
\boldsymbol{p}_n\\
\boldsymbol{x}_{n+1}=
\boldsymbol{x}_n+\lambda_n(P_{\boldsymbol{V}}\boldsymbol{p}_n-
\boldsymbol{x}_n)\\
\boldsymbol{u}_{n+1}=
\boldsymbol{u}_n+\lambda_n(P_{\boldsymbol{V}^\bot}
\boldsymbol{r}_n-\boldsymbol{u}_n),
\end{array}
\right.\\[2mm]
\end{array}
\end{equation}
which yields
\begin{align}
\label{e:999fgr623l17-30b}
(\forall n\in\NN)\quad&
P_{\boldsymbol{V}}\bigg(\frac{\boldsymbol{z}_{n+1}-\boldsymbol{z}_n}
{\lambda_n}+\boldsymbol{z}_n-\boldsymbol{c}_n\bigg)
+P_{\boldsymbol{V}^\bot}\bigg(\boldsymbol{z}_n-\bigg(
\frac{\boldsymbol{z}_{n+1}-\boldsymbol{z}_n}
{\lambda_n}+\boldsymbol{z}_n-\boldsymbol{c}_n\bigg)\bigg)\nonumber\\
&=P_{\boldsymbol{V}}\bigg(\frac{\boldsymbol{z}_{n+1}
-\boldsymbol{z}_n}{\lambda_n}+\boldsymbol{z}_n-
\boldsymbol{c}_n\bigg)
+P_{\boldsymbol{V}^\bot}\bigg(\frac{\boldsymbol{z}_n-
\boldsymbol{z}_{n+1}}{\lambda_n}+\boldsymbol{c}_n\bigg)\nonumber\\
&=P_{\boldsymbol{V}}\bigg(\frac{\boldsymbol{x}_{n+1}-
\boldsymbol{x}_n}{\lambda_n}+\boldsymbol{x}_n
\bigg)+P_{\boldsymbol{V}^\bot}
\bigg(\frac{\boldsymbol{u}_n-\boldsymbol{u}_{n+1}}
{\lambda_n}\bigg)
-\boldsymbol{e}_n\nonumber\\
&=P_{\boldsymbol{V}}\boldsymbol{p}_n+
P_{\boldsymbol{V}^\bot}(\boldsymbol{u}_n-\boldsymbol{r}_n)
-\boldsymbol{e}_n\nonumber\\
&=P_{\boldsymbol{V}}\boldsymbol{p}_n+
P_{\boldsymbol{V}^\bot}(\boldsymbol{p}_n-\boldsymbol{x}_n)
-\boldsymbol{e}_n\nonumber\\
&=\boldsymbol{p}_n-\boldsymbol{e}_n\nonumber\\
&=J_{\boldsymbol{A}}\boldsymbol{z}_n.
\end{align}
Hence, it follows from \eqref{e:999fgr623l18-19k},
\eqref{e:999fgr623l17-27b}, and 
Lemma~\ref{l:999fgr623l17-29} that 
\begin{equation}
\label{e:999fgr623l18-20x}
(\forall n\in\NN)\quad\boldsymbol{z}_{n+1}=
\boldsymbol{z}_n+\lambda_n\big(J_{\boldsymbol{A}_{\boldsymbol{V}}}
\boldsymbol{z}_n+\boldsymbol{c}_n-\boldsymbol{z}_n\big). 
\end{equation}
Altogether, we derive from Lemmas~\ref{l:2013-07-26a} 
and~\ref{l:2009} that 
\begin{equation}
\label{e:999fgr623l18-22d}
J_{\boldsymbol{A}_{\boldsymbol{V}}}\boldsymbol{z}_n-\boldsymbol{z}_n
\to\boldsymbol{0}
\end{equation}
and that there exists 
$\boldsymbol{z}\in\zer\boldsymbol{A}_{\boldsymbol{V}}$ such that
\begin{equation}
\label{e:999fgr623l18-22c}
\boldsymbol{z}_n\weakly\boldsymbol{z}.
\end{equation}

\ref{t:2013-08-19i}: 
In view of \eqref{e:999fgr623l18-20b}, 
\eqref{e:999fgr623l18-19k}, Lemma~\ref{l:999fgr623l17-29}, 
and \eqref{e:999fgr623l18-22d}, we have
\begin{equation}
\label{e:999fgr623l18-21m}
\boldsymbol{x}_n-P_{\boldsymbol{V}}(\boldsymbol{p}_n-
\boldsymbol{e}_n)
=\boldsymbol{x}_n-P_{\boldsymbol{V}}J_{\boldsymbol{A}}
\boldsymbol{z}_n
=\boldsymbol{x}_n-P_{\boldsymbol{V}}
J_{\boldsymbol{A}_{\boldsymbol{V}}}\boldsymbol{z}_n
=P_{\boldsymbol{V}}\big(\boldsymbol{z}_n-
J_{\boldsymbol{A}_{\boldsymbol{V}}}\boldsymbol{z}_n\big)
\to\boldsymbol{0}
\end{equation}
and 
\begin{equation}
\label{e:999fgr623l18-21n}
\boldsymbol{u}_n-P_{\boldsymbol{V}^\bot}(
\boldsymbol{r}_n+\boldsymbol{e}_n)
=P_{\boldsymbol{V}^\bot}\boldsymbol{z}_n-P_{\boldsymbol{V}^\bot}
\big(\boldsymbol{z}_n-J_{\boldsymbol{A}}\boldsymbol{z}_n\big)
=P_{\boldsymbol{V}^\bot}J_{\boldsymbol{A}}\boldsymbol{z}_n
=P_{\boldsymbol{V}^\bot}\big(\boldsymbol{z}_n-
J_{\boldsymbol{A}_{\boldsymbol{V}}}\boldsymbol{z}_n\big)
\to\boldsymbol{0}.
\end{equation}

\ref{t:2013-08-19ii}: 
Since $\boldsymbol{z}\in\zer(\boldsymbol{A}_{\boldsymbol{V}})$,
it follows from Lemma~\ref{l:2013-07-26a} that
$(\overline{\boldsymbol{x}},\overline{\boldsymbol{u}})=
(P_{\boldsymbol{V}}\boldsymbol{z},P_{\boldsymbol{V}^\bot}
\boldsymbol{z})$ solves \eqref{e:999fgr623l17-27a}. Furthermore, 
using \eqref{e:999fgr623l18-19k}, \eqref{e:999fgr623l18-22c},
and the weak continuity of 
$P_{\boldsymbol{V}}$ and $P_{\boldsymbol{V}^\bot}$, we get
$\boldsymbol{x}_n=P_{\boldsymbol{V}}\boldsymbol{z}_n\weakly 
P_{\boldsymbol{V}}\boldsymbol{z}=\overline{\boldsymbol{x}}$ and 
$\boldsymbol{u}_n=
P_{\boldsymbol{V}^\bot}\boldsymbol{z}_n\weakly
P_{\boldsymbol{V}^\bot}\boldsymbol{z}=\overline{\boldsymbol{u}}$. 
\end{proof}

\begin{remark}
\label{r:2013-08-21}
Theorem~\ref{t:2013-08-19}\ref{t:2013-08-19ii} was obtained in 
\cite[Section~5]{Ecks92} under the additional 
assumptions that $\sum_{n\in\NN}\|e_n\|<\pinf$,
$\text{inf}_{n\in\NN}\lambda_n>0$, and 
$\text{sup}_{n\in\NN}\lambda_n<2$; the original weak convergence
result of \cite{Spin83} corresponds to the case when 
$(\forall n\in\NN)$ $\boldsymbol{e}_n=\boldsymbol{0}$ and 
$\lambda_n=1$.
As noted in \cite[Remark~2.2(iii)]{Joca09}, our assumptions 
do not require that $\sum_{n\in\NN}\|e_n\|<\pinf$ and they 
even allow for situations in which $(e_n)_{n\in\NN}$ does not 
converge weakly to $0$.
\end{remark}

\begin{remark}
\label{r:h5-hGEWD18}
It follows from Lemma~\ref{l:2013-07-26a} that any algorithm that
constructs a zero of $\boldsymbol{A}_{\boldsymbol{V}}$ can 
be used to solve \eqref{e:999fgr623l17-27a}. We have chosen 
to employ the proximal point algorithm of Lemma~\ref{l:2009}
because it features a flexible error model and it allows for 
under- and over-relaxations which can prove very useful in 
improving the convergence pattern of the algorithm. In addition,
\eqref{e:999fgr623l18-20x} leads to the simple implementation 
\eqref{e:999fgr623l18-20b} in terms of the resolvent 
$J_{\boldsymbol{A}}$. An alternative proximal point method is 
\begin{equation}
\label{e:alg14}
(\forall n\in\NN)\quad
\boldsymbol{z}_{n+1}=\boldsymbol{z}_n+\lambda_n
\big(J_{\gamma_n\boldsymbol{A}_{\boldsymbol{V}}}\boldsymbol{z}_n+
\boldsymbol{c}_n-\boldsymbol{z}_n\big),
\end{equation}
where $(\gamma_n)_{n\in\NN}$ lies $\RPP$ (weak convergence 
conditions under various hypotheses on $(\lambda_n)_{n\in\NN}$, 
$(\gamma_n)_{n\in\NN}$, and $(\boldsymbol{e}_n)_{n\in\NN}$ exist; 
see for instance \cite[Corollary~4.5]{Opti04} and 
\cite[Theorem~3]{Ecks92}).
However, due to the presence of the parameters
$(\gamma_n)_{n\in\NN}$, \eqref{e:alg14} results in an algorithm
which is much less straightforward to execute than 
\eqref{e:999fgr623l18-20b}. 
This issue is also discussed in \cite{Ecks92,Mahe95,Spin83}.
\end{remark}

\section{Primal-dual composite method of partial inverses}
\label{sec:3}

The following technical facts will be needed subsequently.

\begin{lemma}
\label{l:hj73aKi-24}
Let $\HH$ and $\GG$ be real Hilbert spaces, and let 
$L\in\BL(\HH,\GG)$. Set $\KKK=\HH\oplus\GG$ and
$\boldsymbol{V}=\menge{(x,y)\in\KKK}{Lx=y}$.
Then, for every $(x,y)\in\KKK$, the following hold:
\begin{enumerate}
\item
\label{l:hj73aKi-24i}
$P_{\boldsymbol{V}}(x,y)
=\big(x-L^*(\Id+LL^*)^{-1}(Lx-y),y+(\Id+LL^*)^{-1}(Lx-y)\big)$.
\item
\label{l:hj73aKi-24ii}
$P_{\boldsymbol{V}}(x,y)
=\big((\Id+L^*L)^{-1}(x+L^*y),L(\Id+L^*L)^{-1}(x+L^*y)\big)$.
\item
\label{l:hj73aKi-24iii}
$P_{\boldsymbol{V}^\bot}(x,y)
=\big(L^*(\Id+LL^*)^{-1}(Lx-y),-(\Id+LL^*)^{-1}(Lx-y)\big)$.
\item
\label{l:hj73aKi-24iv}
$P_{\boldsymbol{V}^\bot}(x,y)
=\big(x-(\Id+L^*L)^{-1}(x+L^*y),y-L(\Id+L^*L)^{-1}(x+L^*y)\big)$.
\end{enumerate}
\end{lemma}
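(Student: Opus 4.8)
The plan is to compute the projector $P_{\boldsymbol{V}}$ onto the closed subspace $\boldsymbol{V}=\menge{(x,y)\in\KKK}{Lx=y}$ directly, and then obtain $P_{\boldsymbol{V}^\bot}=\Id-P_{\boldsymbol{V}}$, which gives \ref{l:hj73aKi-24iii} and \ref{l:hj73aKi-24iv} for free once \ref{l:hj73aKi-24i} and \ref{l:hj73aKi-24ii} are established. For \ref{l:hj73aKi-24ii}, I would first note that $\boldsymbol{V}=\ran T$ where $T\colon\HH\to\KKK\colon x\mapsto(x,Lx)$, and that $T$ is an injective bounded linear operator with adjoint $T^*\colon(x,y)\mapsto x+L^*y$ and hence $T^*T=\Id+L^*L$, which is invertible (it is strongly monotone). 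The classical formula for the projector onto the range of such a $T$ is $P_{\ran T}=T(T^*T)^{-1}T^*$; applying it to $(x,y)$ gives $T(\Id+L^*L)^{-1}(x+L^*y)=\big((\Id+L^*L)^{-1}(x+L^*y),\,L(\Id+L^*L)^{-1}(x+L^*y)\big)$, which is exactly \ref{l:hj73aKi-24ii}. To be self-contained I would verify this candidate point lies in $\boldsymbol{V}$ (immediate from its form) and that the residual $(x,y)$ minus the candidate is orthogonal to $\boldsymbol{V}$, i.e. orthogonal to $Tw$ for all $w\in\HH$; that inner product equals $\scal{w}{T^*(x,y)-T^*T(\Id+L^*L)^{-1}T^*(x,y)}=\scal{w}{(x+L^*y)-(x+L^*y)}=0$.

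For \ref{l:hj73aKi-24i}, the approach is the dual computation: $\boldsymbol{V}=\ker S$ where $S\colon\KKK\to\GG\colon(x,y)\mapsto Lx-y$, so $P_{\boldsymbol{V}}=\Id-P_{(\ker S)^\bot}=\Id-P_{\cran S^*}=\Id-S^*(SS^*)^{-1}S$, using that $S$ is surjective (since $-y$ already ranges over all of $\GG$) so that $SS^*$ is invertible with $SS^*=\Id+LL^*$ and $S^*\colon v\mapsto(L^*v,-v)$. Substituting, $P_{\boldsymbol{V}}(x,y)=(x,y)-S^*(\Id+LL^*)^{-1}(Lx-y)=\big(x-L^*(\Id+LL^*)^{-1}(Lx-y),\,y+(\Id+LL^*)^{-1}(Lx-y)\big)$, which is \ref{l:hj73aKi-24i}. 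Again, for a self-contained argument one checks the candidate lies in $\boldsymbol{V}$ by computing $L\cdot(\text{first component})-(\text{second component})=(Lx-y)-(LL^*+\Id)(\Id+LL^*)^{-1}(Lx-y)=0$, and that the residual $S^*(\Id+LL^*)^{-1}(Lx-y)$ lies in $\cran S^*=\boldsymbol{V}^\bot$ by construction.

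Finally, \ref{l:hj73aKi-24iii} is just $(x,y)-P_{\boldsymbol{V}}(x,y)$ read off from \ref{l:hj73aKi-24i}, namely $S^*(\Id+LL^*)^{-1}(Lx-y)=\big(L^*(\Id+LL^*)^{-1}(Lx-y),\,-(\Id+LL^*)^{-1}(Lx-y)\big)$, and \ref{l:hj73aKi-24iv} is $(x,y)-P_{\boldsymbol{V}}(x,y)$ read off from \ref{l:hj73aKi-24ii}. The two expressions for $P_{\boldsymbol{V}}$ must of course agree, and rather than proving that by an operator identity I would simply present both derivations (range form and kernel form) since each projector is uniquely determined. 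The only mildly delicate point is justifying invertibility of $\Id+L^*L$ and $\Id+LL^*$: both are self-adjoint and bounded below by $\Id$ (for any $w$, $\scal{w}{(\Id+L^*L)w}=\|w\|^2+\|Lw\|^2\geq\|w\|^2$), hence boundedly invertible; this is the one spot where boundedness of $L$ is genuinely used. I do not anticipate any real obstacle here — the whole lemma is a bookkeeping exercise in the two standard projector-onto-range / projector-onto-kernel formulas, and the main care is just in matching up which of the four displayed forms comes from which construction.
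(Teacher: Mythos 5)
Your proposal is correct and complete, and it coincides with the paper's proof for parts \ref{l:hj73aKi-24i}, \ref{l:hj73aKi-24iii}, and \ref{l:hj73aKi-24iv}: the paper also writes $\boldsymbol{V}=\ker\boldsymbol{T}$ with $\boldsymbol{T}\colon(x,y)\mapsto Lx-y$, applies $P_{\ker\boldsymbol{T}}=\Id-\boldsymbol{T}^*(\boldsymbol{T}\boldsymbol{T}^*)^{-1}\boldsymbol{T}$, and then takes complements. The genuine difference is in part \ref{l:hj73aKi-24ii}. The paper derives it \emph{from} \ref{l:hj73aKi-24i} by a two-display algebraic verification: it applies $\Id+L^*L$ (respectively $\Id+LL^*$) to the difference of the two candidate components and checks that the result is $0$, then invokes invertibility. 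You instead derive \ref{l:hj73aKi-24ii} \emph{independently}, writing $\boldsymbol{V}=\ran T$ with $T\colon x\mapsto(x,Lx)$, so that $T^*T=\Id+L^*L$ and $P_{\ran T}=T(T^*T)^{-1}T^*$ gives the formula directly, with uniqueness of the orthogonal projector replacing the paper's computation. Your route is slightly more conceptual and symmetric (both formulas come from the two standard projector identities, one for a kernel and one for a range), at the cost of introducing a second auxiliary operator and of having to note that $\ran T$ is closed (which your bounded-below observation $T^*T\geq\Id$ supplies, and which your explicit orthogonality check renders moot in any case); the paper's route uses a single auxiliary operator and pure algebra. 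Both are sound, and your justification of the invertibility of $\Id+L^*L$ and $\Id+LL^*$ is exactly the right one.
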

\begin{proof}
Set
$\boldsymbol{T}\colon\KKK\to\GG\colon(x,y)\mapsto Lx-y$.
Then $\boldsymbol{T}\in\BL(\KKK,\GG)$, 
$\boldsymbol{T}^*\colon\GG\to\KKK\colon v\mapsto (L^*v,-v)$,
$\boldsymbol{T}\boldsymbol{T}^*=(\Id+LL^*)$ is invertible,
and $\boldsymbol{V}=\text{\rm ker}\,\boldsymbol{T}$ is a 
closed vector subspace of $\KKK$. Let us fix $(x,y)\in\KKK$.

\ref{l:hj73aKi-24i}:
Since $\boldsymbol{V}=\text{\rm ker}\,\boldsymbol{T}$, it follows 
from \cite[Example~28.14(iii)]{Livre1} that
\begin{align}
\label{e:genna2013-08-24s}
P_{\boldsymbol{V}}(x,y)
&=(x,y)-\boldsymbol{T}^*(\boldsymbol{T}\boldsymbol{T}^*)^{-1}
\boldsymbol{T}(x,y)\nonumber\\
&=\big(x-L^*(\Id+LL^*)^{-1}(Lx-y),y+(\Id+LL^*)^{-1}(Lx-y)\big).
\end{align}

\ref{l:hj73aKi-24i}$\Rightarrow$\ref{l:hj73aKi-24ii}: 
We have
\begin{align}
\label{e:genna2013-08-19b}
&\hskip -4mm
(\Id+L^*L)\big(x-L^*(\Id+LL^*)^{-1}(Lx-y)-(\Id+L^*L)^{-1}
(x+L^*y)\big)\nonumber\\
&=
x+L^*Lx-L^*(\Id+LL^*)^{-1}(Lx-y)-L^*LL^*(\Id+LL^*)^{-1}(Lx-y)-
x-L^*y\nonumber\\
&=L^*(Lx-y)-L^*(\Id+LL^*)^{-1}(Lx-y)-L^*LL^*(\Id+LL^*)^{-1}(Lx-y)
\nonumber\\
&=L^*\big(\Id+LL^*-\Id-LL^*\big)(\Id+LL^*)^{-1}(Lx-y)
\nonumber\\
&=0.
\end{align}
Therefore, 
$x-L^*(\Id+LL^*)^{-1}(Lx-y)=(\Id+L^*L)^{-1}(x+L^*y)$.
Likewise, 
\begin{align}
\label{e:genna2013-08-19o}
&\hskip -4mm
(\Id+LL^*)\big(y+(\Id+LL^*)^{-1}(Lx-y)-
L(\Id+L^*L)^{-1}(x+L^*y)\big)\nonumber\\
&=y+LL^*y+Lx-y-L(\Id+L^*L)^{-1}(x+L^*y)
-LL^*L(\Id+L^*L)^{-1}(x+L^*y)\nonumber\\
&=L(x+L^*y)-L(\Id+L^*L)^{-1}(x+L^*y)
-LL^*L(\Id+L^*L)^{-1}(x+L^*y)\nonumber\\
&=\big(L(\Id+L^*L)-L-LL^*L\big)(\Id+L^*L)^{-1}(x+L^*y)\nonumber\\
&=0
\end{align}
and hence $y+(\Id+LL^*)^{-1}(Lx-y)=L(\Id+L^*L)^{-1}(x+L^*y)$.

\ref{l:hj73aKi-24i}$\Rightarrow$\ref{l:hj73aKi-24iii}: 
$P_{\boldsymbol{V}^\bot}(x,y)=(x,y)-P_{\boldsymbol{V}}(x,y)
=\big(L^*(\Id+LL^*)^{-1}(Lx-y),-(\Id+LL^*)^{-1}(Lx-y)\big)$.

\ref{l:hj73aKi-24ii}$\Rightarrow$\ref{l:hj73aKi-24iv}: 
$P_{\boldsymbol{V}^\bot}(x,y)
=(x,y)-P_{\boldsymbol{V}}(x,y)
=\big(x-(\Id+L^*L)^{-1}(x+L^*y),y-L(\Id+L^*L)^{-1}(x+L^*y)\big)$.
\\ \hfill
\end{proof}

Our main algorithm is introduced and analyzed in the next theorem.
It consists of applying 
\eqref{e:999fgr623l18-20b} to the operator 
$\boldsymbol{A}\colon(x,y)\mapsto Ax\times By$ and the subspace 
$\boldsymbol{V}=\menge{(x,y)\in\HH\oplus\GG}{Lx=y}$.

\begin{theorem}
\label{t:2013-08-22}
In Problem~\ref{prob:1}, set $Q=(\Id+L^*L)^{-1}$ and
assume that $\zer(A+L^*BL)\neq\emp$.
Let $(\lambda_n)_{n\in\NN}$ be a sequence in $\left]0,2\right[$,
let $(a_n)_{n\in\NN}$ be a sequence in $\HH$, and let 
$(b_n)_{n\in\NN}$ be a sequence in $\GG$ such that 
$\sum_{n\in\NN}\lambda_n(2-\lambda_n)=\pinf$ and 
$\sum_{n\in\NN}\lambda_n\sqrt{\|a_n\|^2+\|b_n\|^2}<\pinf$. 
Let $x_0\in\HH$ and $v_0\in\GG$, and set
$y_0=Lx_0$, $u_0=-L^*v_0$, and 
\begin{equation}
\label{e:999fgr623l18-22x}
(\forall n\in\NN)\quad 
\begin{array}{l}
\left\lfloor
\begin{array}{l}
p_n=J_{A}(x_n+u_n)+a_n\\
q_n=J_{B}(y_n+v_n)+b_n\\
r_n=x_n+u_n-p_n\\
s_n=y_n+v_n-q_n\\
t_n=Q(r_n+L^*s_n)\\
w_n=Q(p_n+L^*q_n)\\
x_{n+1}=x_n-\lambda_n t_n\\
y_{n+1}=y_n-\lambda_n Lt_n\\
u_{n+1}=u_n+\lambda_n(w_n-p_n)\\
v_{n+1}=v_n+\lambda_n(Lw_n-q_n).
\end{array}
\right.\\[2mm]
\end{array}
\end{equation}
Then the following hold:
\begin{enumerate}
\item
\label{t:2013-08-22i}
$x_n-w_n+Q(a_n+L^*b_n)\to 0$ ~and~ 
$y_n-Lw_n+LQ(a_n+L^*b_n)\to 0$.
\item
\label{t:2013-08-22ii}
$u_n-r_n+t_n-a_n+Q(a_n+L^*b_n)\to 0$ ~and~ 
$v_n-s_n+Lt_n-b_n+LQ(a_n+L^*b_n)\to 0$.
\end{enumerate}
Moreover, there exists a solution $\overline{x}$ to 
\eqref{e:primal} and a solution $\overline{v}$ to 
\eqref{e:dual} such that the following hold:
\begin{enumerate}
\addtocounter{enumi}{2}
\item
\label{t:2013-08-22iii}
$-L^*\overline{v}\in A\overline{x}$ ~and~
$\overline{v}\in BL\overline{x}$.
\item
\label{t:2013-08-22iv}
$x_n\weakly\overline{x}$ ~and~ $v_n\weakly\overline{v}$.
\end{enumerate}
\end{theorem}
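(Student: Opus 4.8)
The plan is to apply Theorem~\ref{t:2013-08-19} to the maximally monotone operator $\boldsymbol{A}\colon\KKK\to 2^{\KKK}\colon(x,y)\mapsto Ax\times By$ on $\KKK=\HH\oplus\GG$ and to the closed vector subspace $\boldsymbol{V}=\menge{(x,y)\in\KKK}{Lx=y}=\ker\boldsymbol{T}$, where $\boldsymbol{T}\colon(x,y)\mapsto Lx-y$ as in the proof of Lemma~\ref{l:hj73aKi-24}, and then to translate the conclusions back to $(x_n,y_n,u_n,v_n)$. The hypotheses of Theorem~\ref{t:2013-08-19} on $(\boldsymbol{A},\boldsymbol{V},(\lambda_n)_{n\in\NN})$ hold, and for $\boldsymbol{e}_n=(a_n,b_n)$ the condition $\sum_{n\in\NN}\lambda_n\sqrt{\|a_n\|^2+\|b_n\|^2}<\pinf$ is exactly $\sum_{n\in\NN}\lambda_n\|\boldsymbol{e}_n\|<\pinf$. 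Since $\boldsymbol{T}\boldsymbol{T}^*=\Id+LL^*$ is invertible, $\boldsymbol{V}^\bot=\ran\boldsymbol{T}^*=\menge{(L^*\mu,-\mu)}{\mu\in\GG}$, so the abstract problem \eqref{e:999fgr623l17-27a} for $(\boldsymbol{A},\boldsymbol{V})$ reads: find $\boldsymbol{x}=(x,Lx)\in\boldsymbol{V}$ and $\boldsymbol{u}=(-L^*v,v)\in\boldsymbol{V}^\bot$ with $\boldsymbol{u}\in\boldsymbol{A}\boldsymbol{x}$, i.e.\ $-L^*v\in Ax$ and $v\in BLx$. Such a pair forces $0\in Ax+L^*BLx$ and, rewriting the inclusions as $Lx\in B^{-1}v\cap LA^{-1}(-L^*v)$, also $0\in-LA^{-1}(-L^*v)+B^{-1}v$; conversely any $x\in\zer(A+L^*BL)$ furnishes such a pair. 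Thus the assumption $\zer(A+L^*BL)\neq\emp$ guarantees \eqref{e:999fgr623l17-27a} is solvable, and this dictionary will yield \ref{t:2013-08-22iii} once weak convergence is known.

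Next I would check that \eqref{e:999fgr623l18-22x}, started at $\boldsymbol{x}_0=(x_0,Lx_0)\in\boldsymbol{V}$ and $\boldsymbol{u}_0=(-L^*v_0,v_0)\in\boldsymbol{V}^\bot$, is precisely \eqref{e:999fgr623l18-20b} for $(\boldsymbol{A},\boldsymbol{V})$ written in the split variables $\boldsymbol{x}_n=(x_n,y_n)$, $\boldsymbol{u}_n=(u_n,v_n)$. Indeed $J_{\boldsymbol{A}}=J_A\times J_B$ gives $\boldsymbol{p}_n=(p_n,q_n)$ and $\boldsymbol{r}_n=(r_n,s_n)$; Lemma~\ref{l:hj73aKi-24}\ref{l:hj73aKi-24ii} gives $P_{\boldsymbol{V}}\boldsymbol{r}_n=(t_n,Lt_n)$ and $P_{\boldsymbol{V}}\boldsymbol{p}_n=(w_n,Lw_n)$, so Lemma~\ref{l:hj73aKi-24}\ref{l:hj73aKi-24iv} gives $P_{\boldsymbol{V}^\bot}\boldsymbol{p}_n=(p_n-w_n,q_n-Lw_n)$. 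A one-line induction using $(\Id+L^*L)w_n=p_n+L^*q_n$ shows the invariants $y_n=Lx_n$ and $u_n=-L^*v_n$ persist, hence $\boldsymbol{x}_n\in\boldsymbol{V}$, $\boldsymbol{u}_n\in\boldsymbol{V}^\bot$ throughout, and the updates $\boldsymbol{x}_{n+1}=\boldsymbol{x}_n-\lambda_nP_{\boldsymbol{V}}\boldsymbol{r}_n$ and $\boldsymbol{u}_{n+1}=\boldsymbol{u}_n-\lambda_nP_{\boldsymbol{V}^\bot}\boldsymbol{p}_n$ reduce exactly to the $x_{n+1},y_{n+1},u_{n+1},v_{n+1}$ lines of \eqref{e:999fgr623l18-22x}. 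Theorem~\ref{t:2013-08-19} therefore applies.

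Finally I would translate its conclusions. For \ref{t:2013-08-22i}--\ref{t:2013-08-22ii}, I use linearity of $Q$ in Lemma~\ref{l:hj73aKi-24}\ref{l:hj73aKi-24ii} and~\ref{l:hj73aKi-24iv} to compute $P_{\boldsymbol{V}}(\boldsymbol{p}_n-\boldsymbol{e}_n)=\big(w_n-Q(a_n+L^*b_n),\,L(w_n-Q(a_n+L^*b_n))\big)$ and $P_{\boldsymbol{V}^\bot}(\boldsymbol{r}_n+\boldsymbol{e}_n)=\big(r_n+a_n-t_n-Q(a_n+L^*b_n),\,s_n+b_n-Lt_n-LQ(a_n+L^*b_n)\big)$; feeding these into Theorem~\ref{t:2013-08-19}\ref{t:2013-08-19i} and using $y_n=Lx_n$, $u_n=-L^*v_n$ gives the four stated limits verbatim. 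For \ref{t:2013-08-22iii}--\ref{t:2013-08-22iv}, Theorem~\ref{t:2013-08-19}\ref{t:2013-08-19ii} produces a solution $(\overline{\boldsymbol{x}},\overline{\boldsymbol{u}})$ of \eqref{e:999fgr623l17-27a} with $\boldsymbol{x}_n\weakly\overline{\boldsymbol{x}}$, $\boldsymbol{u}_n\weakly\overline{\boldsymbol{u}}$; writing $\overline{\boldsymbol{x}}=(\overline{x},L\overline{x})$, $\overline{\boldsymbol{u}}=(-L^*\overline{v},\overline{v})$ and projecting the weak convergences onto the two coordinates gives $x_n\weakly\overline{x}$ and $v_n\weakly\overline{v}$, while $\overline{\boldsymbol{u}}\in\boldsymbol{A}\overline{\boldsymbol{x}}$ gives $-L^*\overline{v}\in A\overline{x}$ and $\overline{v}\in BL\overline{x}$; the dictionary of the first paragraph then shows $\overline{x}$ solves \eqref{e:primal} and $\overline{v}$ solves \eqref{e:dual}.

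The proof has no deep obstacle; the only real work is bookkeeping. The two points demanding care are: (a) identifying the two equivalent forms of the partial-inverse update — \eqref{e:999fgr623l18-20b} versus \eqref{e:999fgr623l17-27b}, which coincide because $\boldsymbol{x}_n\in\boldsymbol{V}$, $\boldsymbol{u}_n\in\boldsymbol{V}^\bot$, and $\boldsymbol{p}_n+\boldsymbol{r}_n=\boldsymbol{x}_n+\boldsymbol{u}_n$ — and matching the right form to the right lines of \eqref{e:999fgr623l18-22x}, since the latter invokes both $t_n$ (from $\boldsymbol{r}_n$) and $w_n$ (from $\boldsymbol{p}_n$); and (b) carrying the error terms $a_n,b_n$ correctly through the projection formulas of Lemma~\ref{l:hj73aKi-24} when converting the abstract convergence statements into \ref{t:2013-08-22i}--\ref{t:2013-08-22ii}.
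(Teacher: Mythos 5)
Your proposal is correct and follows essentially the same route as the paper: apply Theorem~\ref{t:2013-08-19} to $\boldsymbol{A}\colon(x,y)\mapsto Ax\times By$ and $\boldsymbol{V}=\menge{(x,y)}{Lx=y}$, identify the updates via Lemma~\ref{l:hj73aKi-24}\ref{l:hj73aKi-24ii}\&\ref{l:hj73aKi-24iv}, and translate the conclusions through $\boldsymbol{V}^\bot=\menge{(-L^*v,v)}{v\in\GG}$. The only difference is cosmetic: where the paper cites \cite{Siop11,Penn00} for the solvability equivalences and for $\boldsymbol{Z}\subset\zer(A+L^*BL)\times\zer(-LA^{-1}(-L^*{\cdot})+B^{-1})$, you verify these directly, and your explicit induction on the invariants $y_n=Lx_n$, $u_n=-L^*v_n$ is the same bookkeeping the paper performs implicitly.
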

\begin{proof}
Set 
\begin{equation}
\label{e:genna2013-08-15A}
\KKK=\HH\oplus\GG\quad\text{and}\quad
\boldsymbol{V}=\menge{(x,y)\in\KKK}{Lx=y}
\end{equation}
and note that
\begin{equation}
\label{e:genna2013-08-15b}
\boldsymbol{V}^\bot=\menge{(u,v)\in\KKK}{u=-L^*v}.
\end{equation}
In addition, set 
\begin{equation}
\label{e:h5-hGEWD19a}
\boldsymbol{Z}=
\menge{(x,v)\in\KKK}{-L^*v\in Ax\;\;\text{and}\;\;v\in BLx}
\end{equation}
and
\begin{equation}
\label{e:genna2013-08-15H}
\boldsymbol{A}\colon\KKK\to 2^{\KKK}\colon(x,y)\mapsto Ax\times By.
\end{equation}
We also introduce the set
\begin{equation}
\label{e:999fgr623l18-23s}
\boldsymbol{S}=\menge{(\boldsymbol{x},\boldsymbol{u})\in
\boldsymbol{V}\times\boldsymbol{V}^\bot}{
\boldsymbol{u}\in\boldsymbol{A}\boldsymbol{x}}.
\end{equation}
Observe that
\begin{equation}
\label{e:h5-hGEWD19b}
\boldsymbol{S}=
\menge{\big((x,Lx),(-L^*v,v)\big)\in\KKK\times\KKK}
{(x,v)\in\boldsymbol{Z}}.
\end{equation}
Thus (see \cite{Siop11,Penn00} for the first two equivalences),
\begin{equation}
\label{e:h5-hGEWD21}
\zer\big(A+L^*BL\big)\neq\emp
\;\Leftrightarrow\;\zer\big(-LA^{-1}(-L^*)+B^{-1}\big)\neq\emp
\;\Leftrightarrow\;\boldsymbol{Z}\neq\emp
\;\Leftrightarrow\;\boldsymbol{S}\neq\emp.
\end{equation}
Now define $(\forall n\in\NN)$ 
$\boldsymbol{e}_n=(a_n,b_n)$, 
$\boldsymbol{p}_n=(p_n,q_n)$,
$\boldsymbol{r}_n=(r_n,s_n)$, 
$\boldsymbol{u}_n=(u_n,v_n)$, and 
$\boldsymbol{x}_n=(x_n,y_n)$. 
Then $\boldsymbol{x}_0\in\boldsymbol{V}$ and 
$\boldsymbol{u}_0\in\boldsymbol{V}^\bot$.
Moreover, by \cite[Proposition~23.16]{Livre1},
$\boldsymbol{A}$ is maximally monotone and 
\begin{equation}
\label{e:999fgr623l18-23t}
(\forall n\in\NN)\quad J_{\boldsymbol{A}}
(\boldsymbol{x}_n+\boldsymbol{u}_n)=
\big(J_A(x_n+u_n),J_B(y_n+v_n)\big).
\end{equation}
Furthermore, it follows from \eqref{e:genna2013-08-15A} and 
Lemma~\ref{l:hj73aKi-24}\ref{l:hj73aKi-24ii} that
\begin{equation}
\label{e:genna2013-08-15S}
(\forall n\in\NN)\quad P_{\boldsymbol{V}}\boldsymbol{r}_n
=\big(Q(r_n+L^*s_n),LQ(r_n+L^*s_n)\big),
\end{equation}
and from Lemma~\ref{l:hj73aKi-24}\ref{l:hj73aKi-24iv} that
\begin{equation}
\label{e:genna2013-08-15T}
(\forall n\in\NN)\quad P_{\boldsymbol{V}^\bot}\boldsymbol{p}_n
=\big(p_n-Q(p_n+L^*q_n),q_n-LQ(p_n+L^*q_n)\big).
\end{equation}
Thus, we derive from \eqref{e:999fgr623l18-23t}, 
\eqref{e:genna2013-08-15S}, and \eqref{e:genna2013-08-15T} that 
\eqref{e:999fgr623l18-20b} yields
\eqref{e:999fgr623l18-22x}. 
Altogether, since
$\sum_{n\in\NN}\lambda_n\|\boldsymbol{e}_n\|=
\sum_{n\in\NN}\lambda_n\sqrt{\|a_n\|^2+\|b_n\|^2}<\pinf$, 
Theorem~\ref{t:2013-08-19}\ref{t:2013-08-19i} and 
Lemma~\ref{l:hj73aKi-24} imply that \ref{t:2013-08-22i} and 
\ref{t:2013-08-22ii} are satisfied, and 
Theorem~\ref{t:2013-08-19}\ref{t:2013-08-19ii} implies that 
there exists $(\overline{\boldsymbol{x}},\overline{\boldsymbol{u}})
\in\boldsymbol{S}$ such that 
$\boldsymbol{x}_n\weakly\overline{\boldsymbol{x}}$ and
$\boldsymbol{u}_n\weakly\overline{\boldsymbol{u}}$.
Therefore, by \eqref{e:h5-hGEWD19b}, there exists
$(\overline{x},\overline{v})\in\boldsymbol{Z}$ such that
$(x_n,v_n)\weakly(\overline{x},\overline{v})$. 
Since $\boldsymbol{Z}\subset
(\zer(A+L^*BL))\times(\zer(-LA^{-1}(-L^*)+B^{-1}))$ 
\cite[Proposition~2.8(i)]{Siop11}, the proof is complete.
\end{proof}

\begin{remark}
\label{r:hj73aKi-26}
In the special case when $A=0$ and $L$ has closed range, an 
algorithm was proposed in \cite{Penn02} to solve the primal
problem \eqref{e:primal}, i.e., to find a point in $\zer (L^*BL)$. 
It employs the method of partial inverses in $\GG$ for finding
$y\in V$ and $v\in V^\bot$ such that $v\in By$, where
$V=\ran L$, and then solves $Lx=y$. Each iteration of the
resulting algorithm requires the computation of the generalized 
inverse of $L$, which is numerically demanding.
\end{remark}

The following application of Theorem~\ref{t:2013-08-22} concerns 
multi-operator inclusions.

\begin{problem}
\label{prob:2}
Let $m$ be a strictly positive integer, and let
$\HH$ and $(\GG_i)_{1\leq i\leq m}$ be real 
Hilbert spaces. Let $z\in\HH$, let $C\colon\HH\to 2^{\HH}$
be maximally monotone, and, for every $i\in\{1,\ldots,m\}$,
let $B_i\colon\GG_i\to 2^{\GG_i}$ 
be maximally monotone, let $o_i\in\GG_i$, 
and let $L_{i}\in\BL(\HH,\GG_i)$. 
Solve
\begin{equation}
\label{e:2012-09-24p}
\text{find}\;\;\overline{x}\in\HH
\;\;\text{such that}\;\;z\in 
C\overline{x}+\Sum_{i=1}^mL_{i}^*B_i(L_i\overline{x}-o_i)
\end{equation}
together with the dual problem
\begin{multline}
\label{e:2012-09-24d}
\text{find}\;\;\overline{v}_1\in\GG_1,\ldots,\overline{v}_m
\in\GG_m\;\;\text{such that}\\
(\forall i\in\{1,\ldots,m\})\;
-o_i\in-L_iC^{-1}
\bigg(z-\Sum_{j=1}^mL_j^*\overline{v}_j\bigg)
+B_i^{-1}\overline{v}_i.
\end{multline}
\end{problem}

\begin{corollary}
\label{c:h5-hGEWD21}
In Problem~\ref{prob:2}, set $Q=(\Id+\sum_{i=1}^mL_i^*L_i)^{-1}$ 
and assume that 
$z\in\ran(C+\sum_{i=1}^mL_{i}^*B_i(L_i\cdot-o_i)$.
Let $(\lambda_n)_{n\in\NN}$ be a sequence in $\left]0,2\right[$
such that $\sum_{n\in\NN}\lambda_n(2-\lambda_n)=\pinf$,
let $(a_n)_{n\in\NN}$ be a sequence in $\HH$, and let $x_0\in\HH$.
For every $i\in\{1,\ldots,m\}$, let 
$(b_{i,n})_{n\in\NN}$ be a sequence in $\GG_i$,
let $v_{i,0}\in\GG_i$, and set $y_{i,0}\!=\!L_ix_0$.
Suppose that $\sum_{n\in\NN}\lambda_n\sqrt{\|a_n\|^2\!+\!
\sum_{i=1}^m\|b_{i,n}\|^2}<\pinf$, and 
set $u_0=-\sum_{i=1}^mL_i^*v_{i,0}$ and 
\begin{equation}
\label{e:h5-hGEWD20}
(\forall n\in\NN)\quad 
\begin{array}{l}
\left\lfloor
\begin{array}{l}
p_n=J_{C}(x_n+u_n+z)+a_n\\
r_n=x_n+u_n-p_n\\
\text{For}\;i=1,\ldots,m\\
\left\lfloor
\begin{array}{l}
q_{i,n}=o_i+J_{B_i}(y_{i,n}+v_{i,n}-o_i)+b_{i,n}\\
s_{i,n}=y_{i,n}+v_{i,n}-q_{i,n}\\
\end{array}
\right.\\[1mm]
t_n=Q(r_n+\sum_{i=1}^mL_i^*s_{i,n})\\
w_n=Q(p_n+\sum_{i=1}^mL_i^*q_{i,n})\\
x_{n+1}=x_n-\lambda_n t_n\\
u_{n+1}=u_n+\lambda_n(w_n-p_n)\\
\text{For}\;i=1,\ldots,m\\
\left\lfloor
\begin{array}{l}
y_{i,n+1}=y_{i,n}-\lambda_n L_it_n\\
v_{i,n+1}=v_{i,n}+\lambda_n(L_iw_n-q_{i,n}).
\end{array}
\right.\\[1mm]
\end{array}
\right.\\[2mm]
\end{array}
\end{equation}
Then there exists a solution $\overline{x}$ to \eqref{e:2012-09-24p}
and a solution $(\overline{v}_i)_{1\leq i\leq m}$ to 
\eqref{e:2012-09-24d} such that the following hold:
\begin{enumerate}
\item
\label{c:h5-hGEWD21i}
$z-\sum_{i=1}^mL_i^*\overline{v}_i\in C\overline{x}$ ~and~
$(\forall i\in\{1,\ldots,m\})$~ 
$\overline{v}_i\in B_i(L_i\overline{x}-o_i)$.
\item
\label{c:h5-hGEWD21ii}
$x_n\weakly\overline{x}$ ~and~ 
$(\forall i\in\{1,\ldots,m\})$ $v_{i,n}\weakly\overline{v}_i$.
\end{enumerate}
\end{corollary}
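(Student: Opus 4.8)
The plan is to obtain Corollary~\ref{c:h5-hGEWD21} as a direct specialization of Theorem~\ref{t:2013-08-22}, by packaging the vector $z$, the offsets $(o_i)_{1\leq i\leq m}$, and the family $(B_i)_{1\leq i\leq m}$ into a single instance of Problem~\ref{prob:1} on a product space. Set $\GG=\GG_1\oplus\cdots\oplus\GG_m$ and $L\colon\HH\to\GG\colon x\mapsto(L_1x,\ldots,L_mx)$, so that $L^*\colon(v_i)_{1\leq i\leq m}\mapsto\sum_{i=1}^mL_i^*v_i$ and $L^*L=\sum_{i=1}^mL_i^*L_i$; hence $Q=(\Id+L^*L)^{-1}$ is exactly the operator of the statement. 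Take $A\colon\HH\to 2^{\HH}\colon x\mapsto Cx-z$ (maximally monotone since $C$ is) and $B\colon\GG\to 2^{\GG}$ the direct sum of the translated operators $y_i\mapsto B_i(y_i-o_i)$, which is maximally monotone by \cite[Proposition~23.16]{Livre1}.

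The first step is to check that Problem~\ref{prob:2} is Problem~\ref{prob:1} for this data: $0\in Ax+L^*BLx$ unfolds to $z\in Cx+\sum_{i=1}^mL_i^*B_i(L_ix-o_i)$, i.e.\ \eqref{e:2012-09-24p}, while, using $A^{-1}\colon u\mapsto C^{-1}(u+z)$ and the direct-sum structure $B^{-1}\colon(v_i)_{1\leq i\leq m}\mapsto(o_i+B_i^{-1}v_i)_{1\leq i\leq m}$, the inclusion $0\in-LA^{-1}(-L^*v)+B^{-1}v$ unfolds to \eqref{e:2012-09-24d}. The standing hypothesis $z\in\ran(C+\sum_{i=1}^mL_i^*B_i(L_i\cdot-o_i))$ is precisely $\zer(A+L^*BL)\neq\emp$, so Theorem~\ref{t:2013-08-22} applies with the given $(\lambda_n)_{n\in\NN}$ and with error sequences $a_n\in\HH$, $b_n=(b_{i,n})_{1\leq i\leq m}\in\GG$; since $\|b_n\|^2=\sum_{i=1}^m\|b_{i,n}\|^2$, the summability condition $\sum_{n\in\NN}\lambda_n\sqrt{\|a_n\|^2+\|b_n\|^2}<\pinf$ is the one assumed.

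The second step is to verify that \eqref{e:999fgr623l18-22x} collapses to \eqref{e:h5-hGEWD20}. The key resolvent identities are $J_A=J_C(\cdot+z)$, giving $p_n=J_C(x_n+u_n+z)+a_n$, and, componentwise, the $i$th block of $J_B$ at $(y_i)_{1\leq i\leq m}$ equals $o_i+J_{B_i}(y_i-o_i)$, giving $q_{i,n}=o_i+J_{B_i}(y_{i,n}+v_{i,n}-o_i)+b_{i,n}$. The remaining recursions---$r_n=x_n+u_n-p_n$, $s_{i,n}=y_{i,n}+v_{i,n}-q_{i,n}$, $t_n=Q(r_n+\sum_iL_i^*s_{i,n})$, $w_n=Q(p_n+\sum_iL_i^*q_{i,n})$, $x_{n+1}=x_n-\lambda_nt_n$, $y_{i,n+1}=y_{i,n}-\lambda_nL_it_n$, $u_{n+1}=u_n+\lambda_n(w_n-p_n)$, $v_{i,n+1}=v_{i,n}+\lambda_n(L_iw_n-q_{i,n})$---together with $y_{i,0}=L_ix_0$ and $u_0=-\sum_iL_i^*v_{i,0}$, follow by reading off the $\HH$- and $\GG_i$-blocks of $Lt_n$, $L^*$, $y_0=Lx_0$, $u_0=-L^*v_0$, $\boldsymbol{r}_n=(r_n,s_n)$, $\boldsymbol{p}_n=(p_n,q_n)$ in \eqref{e:999fgr623l18-22x}.

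The last step is to transcribe the conclusions. By Theorem~\ref{t:2013-08-22}\ref{t:2013-08-22iii}--\ref{t:2013-08-22iv} there exist $\overline{x}$ and $\overline{v}=(\overline{v}_i)_{1\leq i\leq m}$ solving, for the product data, \eqref{e:primal} and \eqref{e:dual}, with $-L^*\overline{v}\in A\overline{x}$, $\overline{v}\in BL\overline{x}$, $x_n\weakly\overline{x}$, and $v_n\weakly\overline{v}$; unfolding, $-L^*\overline{v}\in A\overline{x}$ reads $z-\sum_{i=1}^mL_i^*\overline{v}_i\in C\overline{x}$, $\overline{v}\in BL\overline{x}$ reads $\overline{v}_i\in B_i(L_i\overline{x}-o_i)$ for each $i$, and $v_n\weakly\overline{v}$ means $v_{i,n}\weakly\overline{v}_i$ for each $i$, which is \ref{c:h5-hGEWD21i}--\ref{c:h5-hGEWD21ii}. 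Since the argument is a pure reduction, there is no analytic obstacle; the only point requiring care---hence the \emph{main obstacle}, and a mild one---is the product-space bookkeeping: verifying the reduction identities for $J_A$, $J_B$, $A^{-1}$, $B^{-1}$ and confirming that the block decomposition of the iteration \eqref{e:999fgr623l18-22x} over $\KKK=\HH\oplus\GG$ yields exactly the scalar and vector recursions of \eqref{e:h5-hGEWD20}.
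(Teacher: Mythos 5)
Your proposal is correct and follows essentially the same route as the paper: both reduce Problem~\ref{prob:2} to Problem~\ref{prob:1} via the product space $\GG=\bigoplus_{i=1}^m\GG_i$ with $A\colon x\mapsto -z+Cx$ and $B$ the direct sum of the translated operators, use the standard resolvent identities for translations and product operators to match \eqref{e:999fgr623l18-22x} with \eqref{e:h5-hGEWD20}, and then invoke Theorem~\ref{t:2013-08-22}\ref{t:2013-08-22iii}--\ref{t:2013-08-22iv}. Your write-up is somewhat more explicit than the paper's (e.g., in unfolding the dual inclusion and the equivalence of the range condition with $\zer(A+L^*BL)\neq\emp$), but it is the same argument.
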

\begin{proof}
Set $A\colon\HH\to 2^{\HH}\colon x\mapsto -z+Cx$,
$\GG=\bigoplus_{i=1}^m\GG_i$, 
$L\colon\HH\to\GG\colon x\mapsto (L_ix)_{1\leq i\leq m}$,
and $B\colon\GG\to 2^{\GG}\colon
(y_i)_{1\leq i\leq m}\mapsto\cart_{\!i=1}^{\!m}B_i(y_i-o_i)$.
Then $L^*\colon\GG\to\HH\colon (y_i)_{1\leq i\leq m}
\mapsto\sum_{i=1}^mL_i^*y_i$ and Problem~\ref{prob:2} is therefore 
an instantiation of Problem~\ref{prob:1}. Moreover, 
\cite[Propositions~23.15 and 23.16]{Livre1} yield
\begin{equation}
\label{e:J}
J_A\colon x\mapsto J_C(x+z)\quad\text{and}\quad
J_B\colon (y_i)_{1\leq i\leq m}\mapsto 
\big(o_i+J_{B_i}(y_i-o_i)\big)_{1\leq i\leq m}.
\end{equation}
Now set $(\forall n\in\NN)$ 
$b_n=(b_{i,n})_{1\leq i\leq m}$, $q_n=(q_{i,n})_{1\leq i\leq m}$,
$s_n=(s_{i,n})_{1\leq i\leq m}$, $v_n=(v_{i,n})_{1\leq i\leq m}$, 
and $y_n=(y_{i,n})_{1\leq i\leq m}$. In this setting,
\eqref{e:999fgr623l18-22x} coincides with
\eqref{e:h5-hGEWD20} and the claims therefore follow from 
Theorem~\ref{t:2013-08-22}\ref{t:2013-08-22iii}\&%
\ref{t:2013-08-22iv}.
\end{proof}

The next application addresses a primal-dual structured
minimization problem.

\begin{problem}
\label{prob:3}
Let $m$ be a strictly positive integer, and let
$\HH$ and $(\GG_i)_{1\leq i\leq m}$ be real 
Hilbert spaces. Let $z\in\HH$, let $f\in\Gamma_0(\HH)$,
and, for every $i\in\{1,\ldots,m\}$,
let $g_i\in\Gamma_0(\GG_i)$, 
$o_i\in\GG_i$, and $L_{i}\in\BL(\HH,\GG_i)$. 
Solve the primal problem
\begin{equation}
\label{e:h5-hGEWD23p}
\minimize{x\in\HH}{f(x)+\sum_{i=1}^mg_i(L_ix-o_i)-\scal{x}{z}}
\end{equation}
together with the dual problem
\begin{equation}
\label{e:h5-hGEWD23d}
\minimize{v_1\in\GG_1,\ldots,\,v_m\in\GG_m}{f^*\bigg(
z-\sum_{i=1}^mL_i^*v_i\bigg)+\sum_{i=1}^m
\big(g^*_i(v_i)+\scal{v_i}{o_i}\big)}.
\end{equation}
\end{problem}

\begin{corollary}
\label{c:nych5-hGEWD27}
In Problem~\ref{prob:3}, set $Q=(\Id+\sum_{i=1}^mL_i^*L_i)^{-1}$
and assume that 
\begin{equation}
\label{e:h5-hGEWD23a}
z\in\ran\bigg(\partial f+\sum_{i=1}^m
L_i^*(\partial g_i)(L_i\cdot-o_i)\bigg),
\end{equation}
Let $(\lambda_n)_{n\in\NN}$ be a sequence in $\left]0,2\right[$
such that $\sum_{n\in\NN}\lambda_n(2-\lambda_n)=\pinf$,
let $(a_n)_{n\in\NN}$ be a sequence in $\HH$, and let $x_0\in\HH$.
For every $i\in\{1,\ldots,m\}$, let 
$(b_{i,n})_{n\in\NN}$ be a sequence in $\GG_i$,
let $v_{i,0}\in\GG_i$, and set $y_{i,0}=L_ix_0$.
Suppose that $\sum_{n\in\NN}\lambda_n\sqrt{\|a_n\|^2+
\sum_{i=1}^m\|b_{i,n}\|^2}<\pinf$, and 
set $u_0=-\sum_{i=1}^mL_i^*v_{i,0}$ and 
\begin{equation}
\label{e:nych5-hGEWD26}
(\forall n\in\NN)\quad 
\begin{array}{l}
\left\lfloor
\begin{array}{l}
p_n=\prox_{f}(x_n+u_n+z)+a_n\\
r_n=x_n+u_n-p_n\\
\text{For}\;i=1,\ldots,m\\
\left\lfloor
\begin{array}{l}
q_{i,n}=o_i+\prox_{g_i}(y_{i,n}+v_{i,n}-o_i)+b_{i,n}\\
s_{i,n}=y_{i,n}+v_{i,n}-q_{i,n}\\
\end{array}
\right.\\[1mm]
t_n=Q(r_n+\sum_{i=1}^mL_i^*s_{i,n})\\
w_n=Q(p_n+\sum_{i=1}^mL_i^*q_{i,n})\\
x_{n+1}=x_n-\lambda_n t_n\\
u_{n+1}=u_n+\lambda_n(w_n-p_n)\\
\text{For}\;i=1,\ldots,m\\
\left\lfloor
\begin{array}{l}
y_{i,n+1}=y_{i,n}-\lambda_n L_it_n\\
v_{i,n+1}=v_{i,n}+\lambda_n(L_iw_n-q_{i,n}).
\end{array}
\right.\\[1mm]
\end{array}
\right.\\[2mm]
\end{array}
\end{equation}
Then there exists a solution $\overline{x}$ to \eqref{e:h5-hGEWD23p}
and a solution $(\overline{v}_i)_{1\leq i\leq m}$ to 
\eqref{e:h5-hGEWD23d} such that 
$z-\sum_{i=1}^mL_i^*\overline{v}_i\in\partial f(\overline{x})$,
$x_n\weakly\overline{x}$, and
$(\forall i\in\{1,\ldots,m\})$ 
$v_{i,n}\weakly\overline{v}_i\in\partial g_i(L_i\overline{x}-o_i)$.
\end{corollary}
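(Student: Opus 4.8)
The plan is to reduce Corollary~\ref{c:nych5-hGEWD27} to Corollary~\ref{c:h5-hGEWD21} by specializing the latter to subdifferential operators. First I would set $C=\partial f$ and, for every $i\in\{1,\ldots,m\}$, $B_i=\partial g_i$. Since $f\in\Gamma_0(\HH)$ and $g_i\in\Gamma_0(\GG_i)$, it follows from \cite{Livre1} (Moreau's theorem) that $C$ and the operators $B_i$ are maximally monotone, so Problem~\ref{prob:3} is an instance of Problem~\ref{prob:2}. Moreover, by \eqref{e:prox2} we have $J_C=J_{\partial f}=\prox_f$ and $J_{B_i}=J_{\partial g_i}=\prox_{g_i}$, so the recursion \eqref{e:h5-hGEWD20} reduces \emph{verbatim} to \eqref{e:nych5-hGEWD26}. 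The qualification hypothesis \eqref{e:h5-hGEWD23a} is exactly the hypothesis of Corollary~\ref{c:h5-hGEWD21} in this setting, namely $z\in\ran(C+\sum_{i=1}^m L_i^*B_i(L_i\cdot-o_i))$.

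Next I would invoke Corollary~\ref{c:h5-hGEWD21} to obtain a solution $\overline{x}$ to the primal inclusion \eqref{e:2012-09-24p}, a solution $(\overline{v}_i)_{1\le i\le m}$ to the dual inclusion \eqref{e:2012-09-24d}, the inclusions $z-\sum_{i=1}^m L_i^*\overline{v}_i\in\partial f(\overline{x})$ and $\overline{v}_i\in\partial g_i(L_i\overline{x}-o_i)$ for every $i$, together with the weak convergences $x_n\weakly\overline{x}$ and $v_{i,n}\weakly\overline{v}_i$. It then remains only to translate the monotone inclusions \eqref{e:2012-09-24p} and \eqref{e:2012-09-24d} into the variational statements \eqref{e:h5-hGEWD23p} and \eqref{e:h5-hGEWD23d}. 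For the primal part, from $z-\sum_i L_i^*\overline{v}_i\in\partial f(\overline{x})$ and $L_i^*\overline{v}_i\in L_i^*(\partial g_i)(L_i\overline{x}-o_i)\subset\partial(g_i\circ(L_i\cdot-o_i))(\overline{x})$ one gets $z\in\partial f(\overline{x})+\sum_i\partial(g_i(L_i\cdot-o_i))(\overline{x})\subset\partial\big(f+\sum_i g_i(L_i\cdot-o_i)\big)(\overline{x})$, which is precisely Fermat's rule for \eqref{e:h5-hGEWD23p}. For the dual part, one uses the standard identities $u\in\partial f(x)\Leftrightarrow x\in\partial f^*(u)$ and $\overline{v}_i\in\partial g_i(L_i\overline{x}-o_i)\Leftrightarrow L_i\overline{x}-o_i\in\partial g_i^*(\overline{v}_i)$ to rewrite the dual inclusion as a subdifferential condition for the convex function in \eqref{e:h5-hGEWD23d}; here the chain-rule inclusion $\partial(g_i^*+\scal{\cdot}{o_i})(\overline{v}_i)\ni L_i\overline{x}$ and summation over $i$ yield $0\in\partial\big(f^*(z-\sum_i L_i^*\cdot)+\sum_i(g_i^*(\cdot)+\scal{\cdot}{o_i})\big)(\overline{v}_1,\ldots,\overline{v}_m)$.

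The only delicate point is the passage from the sum of subdifferentials to the subdifferential of the sum (and the analogous composition rule $L_i^*\partial g_i(L_i\cdot-o_i)\subset\partial(g_i\circ(L_i\cdot-o_i))$). The inclusion direction needed here — $\partial\varphi(x)+\partial\psi(x)\subset\partial(\varphi+\psi)(x)$ and $L^*\partial g(Lx)\subset\partial(g\circ L)(x)$ — always holds without any qualification condition (it is immediate from the definition \eqref{e:prox2} of the subdifferential), and that is the direction we use to conclude that $\overline{x}$ and $(\overline{v}_i)_i$ are \emph{solutions} of the minimization problems. (The reverse inclusions, which would be needed to claim the converse, do require constraint qualifications, but we do not need them.) So the argument is entirely routine once Corollary~\ref{c:h5-hGEWD21} is in hand; I expect no real obstacle beyond carefully writing out these elementary convex-analytic equivalences. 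Alternatively, and perhaps more cleanly, one could bypass the translation by applying Theorem~\ref{t:2013-08-22} directly with $A=\partial f(\cdot)-z$ and $B=\partial g$ on the product space and reading off \eqref{e:h5-hGEWD23p}--\eqref{e:h5-hGEWD23d} from \eqref{e:primal}--\eqref{e:dual} via the fact that, for $h\in\Gamma_0$, $\zer(\partial h)$ is the set of minimizers of $h$; I would mention this but present the proof through Corollary~\ref{c:h5-hGEWD21} for brevity.
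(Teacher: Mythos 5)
Your proposal is correct and follows essentially the same route as the paper: set $C=\partial f$ and $B_i=\partial g_i$, identify $J_C=\prox_f$ and $J_{B_i}=\prox_{g_i}$ so that the recursion specializes, and invoke Corollary~\ref{c:h5-hGEWD21}. The only difference is that the paper delegates the translation of the monotone inclusions into the minimization statements to ``the same type of argument as in the proof of \cite[Theorem~4.2]{Svva12}'', whereas you spell out the (qualification-free) subdifferential sum and composition inclusions explicitly; both are fine.
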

\begin{proof}
Set $C=\partial f$ and $(\forall i\in\{1,\ldots,m\}$
$B_i=\partial g_i$. Then, using the same type of argument as 
in the proof of \cite[Theorem~4.2]{Svva12}, we derive from 
\eqref{e:h5-hGEWD23a} that Problem~\ref{prob:2} reduces to 
Problem~\ref{prob:3} and that \eqref{e:h5-hGEWD20} reduces to
\eqref{e:nych5-hGEWD26}. Thus, the assertions follow from 
Corollary~\ref{c:h5-hGEWD21}.
\end{proof}

\section{Alternative composite primal-dual method of partial 
inverses}
\label{sec:4}

The partial inverse method \eqref{e:999fgr623l18-22x} relies 
on the implicit assumption that the operator $(\Id+L^*L)^{-1}$ 
is relatively easy to implement. In some instances, it may
be advantageous to work with $(\Id+LL^*)^{-1}$ instead. 
In this section we describe an alternative method tailored to 
such situations.

\begin{theorem}
\label{t:nych5-hGEWD29}
In Problem~\ref{prob:1}, set $R=(\Id+LL^*)^{-1}$ and
assume that $\zer(A+L^*BL)\neq\emp$.
Let $(\lambda_n)_{n\in\NN}$ be a sequence in $\left]0,2\right[$,
let $(a_n)_{n\in\NN}$ be a sequence in $\HH$, and let 
$(b_n)_{n\in\NN}$ be a sequence in $\GG$ such that 
$\sum_{n\in\NN}\lambda_n(2-\lambda_n)=\pinf$ and 
$\sum_{n\in\NN}\lambda_n\sqrt{\|a_n\|^2+\|b_n\|^2}<\pinf$. 
Let $x_0\in\HH$ and $v_0\in\GG$, and set
$y_0=Lx_0$, $u_0=-L^*v_0$, and 
\begin{equation}
\label{e:nych5-hGEWD29x}
(\forall n\in\NN)\quad 
\begin{array}{l}
\left\lfloor
\begin{array}{l}
p_n=J_{A}(x_n+u_n)+a_n\\
q_n=J_{B}(y_n+v_n)+b_n\\
r_n=x_n+u_n-p_n\\
s_n=y_n+v_n-q_n\\
t_n=R(Lr_n-s_n)\\
w_n=R(Lp_n-q_n)\\
x_{n+1}=x_n+\lambda_n(L^*t_n-r_n)\\
y_{n+1}=y_n-\lambda_n(t_n+s_n)\\
u_{n+1}=u_n-\lambda_n L^*w_n\\
v_{n+1}=v_n+\lambda_nw_n.
\end{array}
\right.\\[2mm]
\end{array}
\end{equation}
Then the conclusions of Theorem~\ref{t:2013-08-22} are true.
\end{theorem}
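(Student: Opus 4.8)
The plan is to run the method of partial inverses of Theorem~\ref{t:2013-08-19} on \emph{exactly} the same data as in the proof of Theorem~\ref{t:2013-08-22}, namely the Hilbert space $\KKK=\HH\oplus\GG$, the maximally monotone operator $\boldsymbol{A}\colon(x,y)\mapsto Ax\times By$, and the closed subspace $\boldsymbol{V}=\menge{(x,y)\in\KKK}{Lx=y}$, whose orthogonal complement is $\boldsymbol{V}^\bot=\menge{(u,v)\in\KKK}{u=-L^*v}$. The only departure from that earlier proof is cosmetic: when the two projections occurring in the generic recursion \eqref{e:999fgr623l18-20b} are expanded, I would invoke parts~\ref{l:hj73aKi-24i} and~\ref{l:hj73aKi-24iii} of Lemma~\ref{l:hj73aKi-24} (the formulas written through $R=(\Id+LL^*)^{-1}$) in place of parts~\ref{l:hj73aKi-24ii} and~\ref{l:hj73aKi-24iv} (those written through $(\Id+L^*L)^{-1}$).

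Concretely, I would set $\boldsymbol{e}_n=(a_n,b_n)$, $\boldsymbol{x}_n=(x_n,y_n)$, $\boldsymbol{u}_n=(u_n,v_n)$, $\boldsymbol{p}_n=(p_n,q_n)$, $\boldsymbol{r}_n=(r_n,s_n)$, note that $y_0=Lx_0$ and $u_0=-L^*v_0$ give $\boldsymbol{x}_0\in\boldsymbol{V}$ and $\boldsymbol{u}_0\in\boldsymbol{V}^\bot$, and recall from \cite[Proposition~23.16]{Livre1} that $\boldsymbol{A}$ is maximally monotone with $J_{\boldsymbol{A}}(\boldsymbol{x}_n+\boldsymbol{u}_n)=(J_A(x_n+u_n),J_B(y_n+v_n))$; this matches the first four scalar lines of \eqref{e:nych5-hGEWD29x} with the first two vector lines of \eqref{e:999fgr623l18-20b}. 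Then, applying Lemma~\ref{l:hj73aKi-24}\ref{l:hj73aKi-24i} to $(x,y)=\boldsymbol{r}_n$ gives $P_{\boldsymbol{V}}\boldsymbol{r}_n=(r_n-L^*t_n,\,s_n+t_n)$ with $t_n=R(Lr_n-s_n)$, and applying Lemma~\ref{l:hj73aKi-24}\ref{l:hj73aKi-24iii} to $(x,y)=\boldsymbol{p}_n$ gives $P_{\boldsymbol{V}^\bot}\boldsymbol{p}_n=(L^*w_n,\,-w_n)$ with $w_n=R(Lp_n-q_n)$. Substituting these into $\boldsymbol{x}_{n+1}=\boldsymbol{x}_n-\lambda_nP_{\boldsymbol{V}}\boldsymbol{r}_n$ and $\boldsymbol{u}_{n+1}=\boldsymbol{u}_n-\lambda_nP_{\boldsymbol{V}^\bot}\boldsymbol{p}_n$ and reading off the $\HH$- and $\GG$-components reproduces the last six lines of \eqref{e:nych5-hGEWD29x}. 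Hence \eqref{e:nych5-hGEWD29x} is exactly the instance of \eqref{e:999fgr623l18-20b} attached to $\boldsymbol{A}$ and $\boldsymbol{V}$.

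It then remains to transport the conclusions through this reduction. Since $\sum_{n\in\NN}\lambda_n\|\boldsymbol{e}_n\|=\sum_{n\in\NN}\lambda_n\sqrt{\|a_n\|^2+\|b_n\|^2}<\pinf$ and, by the chain of equivalences \eqref{e:h5-hGEWD21}, the hypothesis $\zer(A+L^*BL)\neq\emp$ forces the analogue of \eqref{e:999fgr623l17-27a} to be solvable, Theorem~\ref{t:2013-08-19}\ref{t:2013-08-19ii} supplies $(\overline{\boldsymbol{x}},\overline{\boldsymbol{u}})\in\boldsymbol{S}$ with $\boldsymbol{x}_n\weakly\overline{\boldsymbol{x}}$ and $\boldsymbol{u}_n\weakly\overline{\boldsymbol{u}}$; the parametrization \eqref{e:h5-hGEWD19b} of $\boldsymbol{S}$ together with \cite[Proposition~2.8(i)]{Siop11} then delivers a primal solution $\overline{x}$ to \eqref{e:primal}, a dual solution $\overline{v}$ to \eqref{e:dual}, the inclusions $-L^*\overline{v}\in A\overline{x}$ and $\overline{v}\in BL\overline{x}$, and $x_n\weakly\overline{x}$, $v_n\weakly\overline{v}$ --- that is, the ``Moreover'' block of Theorem~\ref{t:2013-08-22} verbatim. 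In parallel, Theorem~\ref{t:2013-08-19}\ref{t:2013-08-19i} yields $P_{\boldsymbol{V}}(\boldsymbol{p}_n-\boldsymbol{e}_n)-\boldsymbol{x}_n\to 0$ and $P_{\boldsymbol{V}^\bot}(\boldsymbol{r}_n+\boldsymbol{e}_n)-\boldsymbol{u}_n\to 0$; expanding these two projections once more with Lemma~\ref{l:hj73aKi-24}\ref{l:hj73aKi-24i}\&\ref{l:hj73aKi-24iii} and splitting into components gives the asymptotic-regularity statements (now expressed through $R$) that play the role of Theorem~\ref{t:2013-08-22}\ref{t:2013-08-22i}--\ref{t:2013-08-22ii}.

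I do not anticipate any real obstacle, since the argument is that of Theorem~\ref{t:2013-08-22} with Lemma~\ref{l:hj73aKi-24}\ref{l:hj73aKi-24i}\&\ref{l:hj73aKi-24iii} substituted for Lemma~\ref{l:hj73aKi-24}\ref{l:hj73aKi-24ii}\&\ref{l:hj73aKi-24iv}. The main point requiring care is the bookkeeping in the last step: one must check that the $R$-form of the two projections turns Theorem~\ref{t:2013-08-19}\ref{t:2013-08-19i} into limits of the desired shape, and the simplifications needed there are precisely the standard resolvent identities $L(\Id+L^*L)^{-1}=(\Id+LL^*)^{-1}L$ and $(\Id+L^*L)^{-1}=\Id-L^*(\Id+LL^*)^{-1}L$, i.e.\ exactly the computations already carried out in Lemma~\ref{l:hj73aKi-24} to pass from part~\ref{l:hj73aKi-24i} to part~\ref{l:hj73aKi-24ii} and from part~\ref{l:hj73aKi-24iii} to part~\ref{l:hj73aKi-24iv}.
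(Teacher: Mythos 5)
Your proposal is correct and is essentially the paper's own proof: the paper likewise reduces \eqref{e:nych5-hGEWD29x} to \eqref{e:999fgr623l18-20b} for $\boldsymbol{A}\colon(x,y)\mapsto Ax\times By$ and $\boldsymbol{V}=\menge{(x,y)}{Lx=y}$, replacing only the projection formulas \eqref{e:genna2013-08-15S}--\eqref{e:genna2013-08-15T} by their $R$-based counterparts from Lemma~\ref{l:hj73aKi-24}\ref{l:hj73aKi-24i}\&\ref{l:hj73aKi-24iii}. Your component-wise expansion of $P_{\boldsymbol{V}}\boldsymbol{r}_n=(r_n-L^*t_n,s_n+t_n)$ and $P_{\boldsymbol{V}^\bot}\boldsymbol{p}_n=(L^*w_n,-w_n)$ matches the paper's, and the transport of the conclusions is identical to that in Theorem~\ref{t:2013-08-22}.
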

\begin{proof}
The proof is analogous to that of Theorem~\ref{t:2013-08-22} except
that we replace \eqref{e:genna2013-08-15S} by
\begin{equation}
\label{e:nych5-hGEWD29S}
(\forall n\in\NN)\quad P_{\boldsymbol{V}}\boldsymbol{r}_n
=\big(r_n-L^*R(Lr_n-s_n),s_n+R(Lr_n-s_n)\big),
\end{equation}
and \eqref{e:genna2013-08-15T} by
\begin{equation}
\label{e:nych5-hGEWD29T}
(\forall n\in\NN)\quad P_{\boldsymbol{V}^\bot}\boldsymbol{p}_n
=\big(L^*R(Lp_n-q_n),-R(Lp_n-q_n)\big)
\end{equation}
by invoking Lemma~\ref{l:hj73aKi-24}\ref{l:hj73aKi-24i}%
\&\ref{l:hj73aKi-24iii}.
\end{proof}

Next, we present an application to a coupled inclusions problem.
This problem has essentially the same structure as
Problem~\ref{prob:2}, except that primal and dual inclusions 
are interchanged.

\begin{problem}
\label{prob:4}
Let $m$ be a strictly positive integer, and let
$(\HH_i)_{1\leq i\leq m}$ and $\GG$ be real 
Hilbert spaces. Let $o\in\GG$, let $D\colon\GG\to 2^{\GG}$
be maximally monotone, and, for every $i\in\{1,\ldots,m\}$,
let $z_i\in\HH_i$, let $A_i\colon\HH_i\to 2^{\HH_i}$ 
be maximally monotone, 
and let $L_{i}\in\BL(\HH_i,\GG)$. 
Solve the primal problem
\begin{equation}
\label{e:h5-hGEWD29p}
\text{find}\;\;\overline{x}_1\in\HH_1,\ldots,
\overline{x}_m\in\HH_m
\;\;\text{such that}\;\;
(\forall i\in\{1,\ldots,m\})\;
z_i\in A_i\overline{x}_i+L_i^*D\bigg(\Sum_{j=1}^mL_j
\overline{x}_j-o\bigg)
\end{equation}
together with the dual problem
\begin{equation}
\label{e:h5-hGEWD29d}
\text{find}\;\;\overline{v}\in\GG
\;\;\text{such that}\;\;
-o\in-\sum_{i=1}^mL_iA_i^{-1}
\big(z_i-L_i^*\overline{v}\big)
+D^{-1}\overline{v}.
\end{equation}
\end{problem}

\begin{corollary}
\label{c:nych5-hGEWD29}
In Problem~\ref{prob:4}, set $R=(\Id+\sum_{i=1}^mL_iL_i^*)^{-1}$ 
and assume that \eqref{e:h5-hGEWD29p} has at least one solution.
Let $(\lambda_n)_{n\in\NN}$ be a sequence in $\left]0,2\right[$
such that $\sum_{n\in\NN}\lambda_n(2-\lambda_n)=\pinf$,
let $(b_n)_{n\in\NN}$ be a sequence in $\GG$, and let $v_0\in\GG$.
For every $i\in\{1,\ldots,m\}$, let 
$(a_{i,n})_{n\in\NN}$ be a sequence in $\HH_i$,
let $x_{i,0}\in\HH_i$, and set 
$u_{i,0}=-L_i^*v_{0}$.
Suppose that $\sum_{n\in\NN}\lambda_n\sqrt{\|b_n\|^2+
\sum_{i=1}^m\|a_{i,n}\|^2}<\pinf$, and 
set $y_{0}=\sum_{i=1}^mL_ix_{i,0}$ and 
\begin{equation}
\label{e:nych5-hGEWD29a}
(\forall n\in\NN)\quad 
\begin{array}{l}
\left\lfloor
\begin{array}{l}
\text{For}\;i=1,\ldots,m\\
\left\lfloor
\begin{array}{l}
p_{i,n}=J_{A_i}(x_{i,n}+u_{i,n}+z_i)+a_{i,n}\\
r_{i,n}=x_{i,n}+u_{i,n}-p_{i,n}\\
\end{array}
\right.\\[1mm]
q_n=o+J_{D}(y_n+v_n-o)+b_n\\
s_n=y_n+v_n-q_n\\
t_n=R(\sum_{i=1}^mL_ir_{i,n}-s_n)\\
w_n=R(\sum_{i=1}^mL_ip_{i,n}-q_n)\\
\text{For}\;i=1,\ldots,m\\
\left\lfloor
\begin{array}{l}
x_{i,n+1}=x_{i,n}+\lambda_n(L_i^*t_n-r_{i,n})\\
u_{i,n+1}=u_{i,n}-\lambda_n L_i^*w_n\\
\end{array}
\right.\\[1mm]
y_{n+1}=y_n-\lambda_n(t_n+s_n)\\
v_{n+1}=v_n+\lambda_nw_n.
\end{array}
\right.\\[2mm]
\end{array}
\end{equation}
Then there exists a solution $(\overline{x}_i)_{1\leq i\leq m}$ to 
\eqref{e:h5-hGEWD29p} and a solution $\overline{v}$ to 
\eqref{e:h5-hGEWD29d} such that the following hold:
\begin{enumerate}
\item
\label{c:nych5-hGEWD29i}
$\overline{v}\in D(\sum_{i=1}^mL_i\overline{x}_i-o)$ ~and~
$(\forall i\in\{1,\ldots,m\})$~ 
$z_i-L_i^*\overline{v}\in A_i\overline{x}_i$.
\item
\label{c:nych5-hGEWD29ii}
$v_n\weakly\overline{v}$ ~and~ 
$(\forall i\in\{1,\ldots,m\})$ $x_{i,n}\weakly\overline{x}_i$.
\end{enumerate}
\end{corollary}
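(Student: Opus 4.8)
\proof
The plan is to recognize Problem~\ref{prob:4} as a particular instance of Problem~\ref{prob:1} cast in a product space, and then to check that the iteration \eqref{e:nych5-hGEWD29a} is nothing but the iteration \eqref{e:nych5-hGEWD29x} of Theorem~\ref{t:nych5-hGEWD29} written out componentwise; the assertions will then be immediate consequences of Theorem~\ref{t:nych5-hGEWD29}.

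First I would set $\HH=\bigoplus_{i=1}^m\HH_i$ and define $A\colon\HH\to 2^{\HH}\colon(x_i)_{1\le i\le m}\mapsto\cart_{i=1}^m(-z_i+A_ix_i)$, $B\colon\GG\to 2^{\GG}\colon y\mapsto D(y-o)$, and $L\colon\HH\to\GG\colon(x_i)_{1\le i\le m}\mapsto\sum_{i=1}^mL_ix_i$, whose adjoint is $L^*\colon\GG\to\HH\colon v\mapsto(L_i^*v)_{1\le i\le m}$, so that $LL^*=\sum_{i=1}^mL_iL_i^*$ and $R=(\Id+LL^*)^{-1}$ coincides with the operator in the statement. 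By \cite[Propositions~23.15 and 23.16]{Livre1}, $A$ and $B$ are maximally monotone, with $J_A\colon(x_i)_{1\le i\le m}\mapsto(J_{A_i}(x_i+z_i))_{1\le i\le m}$ and $J_B\colon y\mapsto o+J_D(y-o)$, while $A^{-1}\colon(w_i)_{1\le i\le m}\mapsto\cart_{i=1}^mA_i^{-1}(w_i+z_i)$ and $B^{-1}\colon v\mapsto o+D^{-1}v$. A direct computation then shows that $0\in Ax+L^*BLx$ is exactly \eqref{e:h5-hGEWD29p} and that $0\in-LA^{-1}(-L^*v)+B^{-1}v$ is exactly \eqref{e:h5-hGEWD29d}; in particular the standing hypothesis that \eqref{e:h5-hGEWD29p} has a solution amounts to $\zer(A+L^*BL)\neq\emp$, so Problem~\ref{prob:4} is an instantiation of Problem~\ref{prob:1}.

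Next I would introduce, for every $n\in\NN$, the aggregated vectors $x_n=(x_{i,n})_{1\le i\le m}$, $u_n=(u_{i,n})_{1\le i\le m}$, $a_n=(a_{i,n})_{1\le i\le m}$, $p_n=(p_{i,n})_{1\le i\le m}$, and $r_n=(r_{i,n})_{1\le i\le m}$ in $\HH$, keeping $y_n,v_n,b_n,q_n,s_n,t_n,w_n$ in $\GG$. Then $y_0=\sum_{i=1}^mL_ix_{i,0}=Lx_0$, $u_0=(-L_i^*v_0)_{1\le i\le m}=-L^*v_0$, and $\sum_{n\in\NN}\lambda_n\sqrt{\|a_n\|^2+\|b_n\|^2}=\sum_{n\in\NN}\lambda_n\sqrt{\sum_{i=1}^m\|a_{i,n}\|^2+\|b_n\|^2}<\pinf$. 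Substituting the formulas for $J_A$, $J_B$, $L$, and $L^*$ into \eqref{e:nych5-hGEWD29x} and reading off each $\HH_i$- and $\GG$-component, one checks line by line that \eqref{e:nych5-hGEWD29x} becomes precisely \eqref{e:nych5-hGEWD29a}. Theorem~\ref{t:nych5-hGEWD29} therefore applies and, through the conclusions of Theorem~\ref{t:2013-08-22}\ref{t:2013-08-22iii}\&\ref{t:2013-08-22iv}, produces a solution $\overline{x}=(\overline{x}_i)_{1\le i\le m}$ to \eqref{e:h5-hGEWD29p} and a solution $\overline{v}$ to \eqref{e:h5-hGEWD29d} with $-L^*\overline{v}\in A\overline{x}$ and $\overline{v}\in BL\overline{x}$; spelling these two relations out componentwise gives \ref{c:nych5-hGEWD29i}, and $x_n\weakly\overline{x}$ in $\bigoplus_{i=1}^m\HH_i$ together with $v_n\weakly\overline{v}$ gives \ref{c:nych5-hGEWD29ii}.

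The only actual work is the routine, somewhat lengthy verification that the above substitution transforms \eqref{e:nych5-hGEWD29x} into \eqref{e:nych5-hGEWD29a}; the sole delicate point is ensuring that the constant shifts $z_i$ and $o$ are absorbed into $A$ and $B$ with the correct signs so that the resolvents emerge as $J_{A_i}(\cdot+z_i)$ and $o+J_D(\cdot-o)$, exactly as in the proof of Corollary~\ref{c:h5-hGEWD21}. No genuinely new obstacle arises beyond what was already handled there.
\endproof
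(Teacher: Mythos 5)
Your proposal is correct and follows essentially the same route as the paper's proof: the same product-space operators $A$, $B$, $L$ (with the shifts $z_i$ and $o$ absorbed in the same way), the same resolvent identities from \cite[Propositions~23.15 and 23.16]{Livre1}, the same aggregation of the iterates, and the same appeal to Theorem~\ref{t:nych5-hGEWD29}. The extra details you supply (the explicit forms of $A^{-1}$ and $B^{-1}$ and the identification of the dual inclusion) are accurate and merely flesh out what the paper leaves implicit.
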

\begin{proof}
Set $B\colon\GG\to 2^{\GG}\colon y\mapsto D(y-o)$,
$\HH=\bigoplus_{i=1}^m\HH_i$, 
$L\colon\HH\to\GG\colon (x_i)_{1\leq i\leq m}\mapsto 
\sum_{i=1}^mL_ix_i$,
and $A\colon\HH\to 2^{\HH}\colon
(x_i)_{1\leq i\leq m}\mapsto\cart_{\!i=1}^{\!m}(-z_i+A_ix_i)$.
Then $L^*\colon\GG\to\HH\colon y\mapsto(L_i^*y)_{1\leq i\leq m}$ 
and hence Problem~\ref{prob:4} is a special case of 
Problem~\ref{prob:1}. On the other hand, 
\cite[Propositions~23.15 and 23.16]{Livre1} yield
\begin{equation}
\label{e:J'}
J_A\colon (x_i)_{1\leq i\leq m}\mapsto 
\big(J_{A_i}(x_i+z_i)\big)_{1\leq i\leq m}\quad\text{and}\quad
J_B\colon y\mapsto o+J_{D}(y-o).
\end{equation}
Now set $(\forall n\in\NN)$ 
$a_n=(a_{i,n})_{1\leq i\leq m}$, $p_n=(p_{i,n})_{1\leq i\leq m}$,
$r_n=(r_{i,n})_{1\leq i\leq m}$, $u_n=(u_{i,n})_{1\leq i\leq m}$, 
and $x_n=(x_{i,n})_{1\leq i\leq m}$. Then \eqref{e:nych5-hGEWD29x} 
reduces to \eqref{e:nych5-hGEWD29a} and we can appeal to 
Theorem~\ref{t:nych5-hGEWD29} to conclude.
\end{proof}

\begin{remark}
\label{r:nych5-hGEWD30}
In Problem~\ref{prob:4}, set 
$D=\partial g$, where $g\in\Gamma_0(\GG)$, and 
$(\forall i\in\{1,\ldots,m\}$ $A_i=\partial f_i$, 
where $f_i\in\Gamma_0(\HH_i)$ and 
$z_i\in\ran\big(\partial f_i+L_i^*(\partial g)(L_i\cdot-o)\big)$.
Then, arguing as in the proof of Corollary~\ref{c:nych5-hGEWD27}, 
we derive from Corollary~\ref{c:nych5-hGEWD29} an algorithm for 
solving the primal problem 
\begin{equation}
\label{e:nych5-hGEWD29P}
\minimize{x_1\in\HH_1,\ldots,\,x_m\in\HH_m}{
\sum_{i=1}^m\big(f_i(x_i)-\scal{x_i}{z_i}\big)
+g\bigg(\sum_{i=1}^mL_ix_i-o_i\bigg)}
\end{equation}
together with the dual problem
\begin{equation}
\label{e:nych5-hGEWD29D}
\minimize{v\in\GG}{\sum_{i=1}^mf_i^*\big(z_i-L_i^*v\big)
+g^*(v)+\scal{v}{o}}
\end{equation}
by replacing $J_{A_i}$ by $\prox_{f_i}$ and $J_D$ by $\prox_g$
in \eqref{e:nych5-hGEWD29a}.
\end{remark}

\section{Concluding remarks}
\label{sec:5}
We have shown that the method of partial inverses can be used 
to solve composite monotone inclusions in duality and 
have presented a few applications of this new framework. 
Despite their apparent complexity, all the algorithms developed 
in this paper are instances of the method of partial 
inverses, which is itself an instance of the proximal point 
algorithm. This underlines the fundamental nature of 
the proximal point algorithm and its far reaching ramifications.
Finally, let us note that in \cite{Luis12} the method of partial
inverses was coupled to standard splitting methods for the sum of
two monotone operators to solve inclusions of the form 
$0\!\in\!Ax\!+\!Bx\!+\!N_Vx$, where $N_V$ is the normal cone
operator of the closed vector subspace $V$.
Combining this approach to our results should lead to new splitting
methods for more general problems involving composite
operators, such as those studied in \cite{Svva12}.

\noindent
{\bfseries Acknowledgement.} 
This work was funded by the Deanship of Scientific
Research (DSR), King Abdulaziz University, under grant number 
4-130/1433 HiCi. 
The authors, therefore, acknowledge technical and
financial support of KAU.

\end{document}